\documentclass{article}
\title{Explicit reduction modulo $p$ of certain 2-dimensional
crystalline representations.}
\author{Kevin Buzzard, Toby Gee} 

\usepackage{amsmath} 

\usepackage{amssymb}

\usepackage{a4wide}

\usepackage[all]{xy}

\newcommand{\Bbar}{\overline{B}}

\newcommand{\Fps}{\mathbf{F}_{p^2}}
\newcommand{\Fpbar}{\overline{\mathbf{F}}_p}
\newcommand{\GQ}{\Gal(\Qbar/\Q)}
\newcommand{\GQp}{\Gal(\Qpbar/\Qp)}
\newcommand{\rhobar}{\overline{\rho}}
\newcommand{\Q}{\mathbf{Q}}
\newcommand{\Qbar}{\overline{\Q}}
\newcommand{\Qp}{\Q_p}
\newcommand{\Qpbar}{\overline{\Q}_p}
\newcommand{\Thetabar}{\overline{\Theta}}
\newcommand{\thetabar}{\overline{\theta}}
\newcommand{\Vbar}{\overline{V}}
\newcommand{\Z}{\mathbf{Z}}
\newcommand{\Zp}{\Z_p}
\newcommand{\Zpbar}{\overline{\Z}_p}

\DeclareMathOperator{\cris}{cris}
\DeclareMathOperator{\coker}{coker}

\DeclareMathOperator{\End}{End}
\DeclareMathOperator{\Frob}{Frob}
\DeclareMathOperator{\Id}{Id}
\DeclareMathOperator{\ind}{ind}
\DeclareMathOperator{\im}{im}
\DeclareMathOperator{\Gal}{Gal}
\DeclareMathOperator{\GL}{GL}

\DeclareMathOperator{\sss}{ss}
\DeclareMathOperator{\SL}{SL}
\DeclareMathOperator{\St}{St}
\DeclareMathOperator{\Symm}{Symm}

\def\smallmat#1#2#3#4{\bigl(\begin{smallmatrix}{#1}&{#2}\\{#3}&{#4}\end{smallmatrix}\bigr)}

\usepackage{amsthm}
\theoremstyle{plain}
\newtheorem{theorem}{Theorem}[section]
\newtheorem{proposition}[theorem]{Proposition}
\newtheorem{defn}[theorem]{Definition}
\newtheorem{lemma}[theorem]{Lemma}
\newtheorem{corollary}[theorem]{Corollary}
\theoremstyle{remark}
\newtheorem{remark}[theorem]{Remark}
\newtheorem{question}[theorem]{Question}

\begin{document}
\maketitle

\begin{abstract}We use the $p$-adic local Langlands correspondence for
$\GL_{2}(\Qp)$ to explicitly compute the reduction modulo $p$ of certain
2-dimensional crystalline
representations of small slope, and give applications to modular
forms.\end{abstract}

\section{Introduction.}

Let $f=\sum_{n\geq1} a_nq^n$ be a weight~$k$ cusp form for the group
$\Gamma_1(N)\subseteq\SL_2(\Z)$,
and assume that $f$ is normalised ($a_1=1$), is an eigenform
for all the Hecke operators, and has character $\psi$ (a Dirichlet
character modulo~$N$). The coefficients of $f$ are
complex numbers, but are well-known to be algebraic over $\Q$
and hence can be regarded (after some choices) as elements
of $\Qpbar$, where $p$ is a prime number. Deligne associated
to $f$ a $p$-adic Galois representation
$$\rho_f:\GQ\to\GL_2(\Qpbar).$$
Let us normalise the construction so that if $f$ is associated
to an elliptic curve over $\Q$ then $\rho_f$ is the Tate module of
the curve; this choice of normalisation is sometimes not ideal,
but our main results appear cleaner with this choice.

A lot of explicit information is known about
the ``local'' structure of
$\rho_f$, by which we mean $\rho_f|_{D_\ell}$, where $D_\ell$ denotes
a decomposition group at some prime number $\ell$.
For example if $\ell$ is a prime not dividing $Np$
then $\rho_f$ is unramified at $\ell$ and the characteristic polynomial
of $\rho_f(\Frob_\ell)$ (with $\Frob_\ell$ an arithmetic Frobenius)
is $X^2-a_\ell X+\ell^{k-1}\psi(\ell)$. In particular the local structure
of $\rho$ at $\ell$ is determined by the local structure of $f$ at
$\ell$, which, because $\ell\nmid N$, can be interpreted as
the triple $(a_\ell, k,\psi(\ell))$.
This is visibly an explicit description of $\rho_f|_{D_\ell}$ (there is
still a question regarding semisimplicity of $\rho_f(\Frob_\ell)$ in the
case where the two eigenvalues coincide, but conjecturally this should
never occur in weight $k\geq2$, and in weight~1 $\rho_f(\Frob_\ell)$
is known to be semisimple anyway).

If $\ell|N$ but $\ell\not=p$
then the local Langlands conjecture for $\GL_2$ (a bijection which
can be explicitly written down in essentially all cases) and a ``local-global''
theorem
of Carayol (following Deligne and Langlands) again gives us an explicit
description of $\rho_f|_{D_\ell}$.

Let us now turn our attention to the local structure of $\rho_f$
at $p$. Now one might argue that the theorems
describing $\rho_f|_{D_p}$ in terms of the data attached to $f$
are far from ``explicit''---and indeed, how could one expect
them to be explicit: a 2-dimensional $p$-adic representation
of $D_p$ is a very complicated object. However the \emph{mod $p$}
2-dimensional representations of $D_p$ are easily classified:
the reducible ones are, up to semisimplification, the sum of two
characters, and the irreducible ones are all induced from characters
of the absolute Galois group of the unramified quadratic
extension of $\Q_p$. So using a little local class field theory
it is easy to explicitly list all these representations.
The following ``practical'' question
then arises:

\begin{question} If $f=\sum a_nq^n$ is a normalised cuspidal
level~$N$ eigenform and $p$ is a prime, and if $\overline{\rho}_f$
is the associated semisimple representation
$$\overline{\rho}_f:\GQ\to\GL_2(\Fpbar),$$
then can one explicitly read off $(\overline{\rho}_f|_{D_p})^{\sss}$
from the weight, character and $q$-expansion of $f$?
\end{question}

In this generality, the answer to the question is in some
sense ``no''. For example, if $f$ is a newform of level $\Gamma_0(N)$
and $p$ divides $N$ exactly once, then to know $\rho_f|_{D_p}$
is (essentially) to know the value of the so-called $\cal{L}$-invariant
of $f$ at $p$, and
this invariant is subtle: as far as anyone knows, it cannot be easily read
off from the $q$-expansion of $f$, and is not a ``local''
invariant of the classical automorphic representation attached to $f$.
Furthermore, the problem does not go away when reducing mod $p$:
$\overline{\rho}_f|_{D_p}$ also depends heavily on the $\cal{L}$-invariant,
even for small weight modular forms: see for example Th\'eor\`eme~4.2.4.7
of~\cite{breuil-mezard} for some examples of how the $\cal{L}$-invariant
affects the local mod~$p$ representation.
If furthermore $p$ divides $N$ more than once,
then even less is known, and the explicit dictionary is no doubt
even more complicated.

However, if $p\nmid N$ the situation is much better. The local data
attached to $f$ at $p$ is the triple $(a_p,k,\psi(p))$ (in the sense
that the local component $\pi_p$ of the automorphic representation
$\pi$ associated to $f$ is completely determined by this data), and in some
cases this local data does determine a lot about the local representation.
We now explain some of the results known in this situation. First
we introduce some notation.

We identify $D_p$ with the local Galois group $\GQp$.
Let $\mu_\alpha$ denote the unramified character of $D_p$ sending
a geometric Frobenius to $\alpha$. Let us normalise the isomorphisms
of local class field theory by identifying uniformisers with geometric
Frobenii. Let  $\epsilon:D_p\to\Z_p^\times$ denote
the cyclotomic character, and let $\omega:D_p\to\Fpbar^\times$ denote
the mod~$p$ reduction of~$\epsilon$.
Let $\omega_2:\Gal(\Qpbar/\Q_{p^2})\to\Fpbar^\times$
denote a character such that the induced map $\Q_{p^2}^\times\to\Fpbar^\times$
sends~$p$ to~$1$ and such that the induced map
$\Fps^\times\to\Fpbar^\times$ is induced by a morphism
of fields $\Fps\to\Fpbar$. There are two such choices for $\omega_2$;
we fix one.
We call $\omega_2$ a ``fundamental character of niveau 2''.
The other fundamental character is $\omega_2^p$. 
Note that $\omega_2^{p+1}=\omega$ on $I_p$, the inertia
subgroup of $D_p$. Abusing notation, let us also use $\omega_2$ to mean
$\omega_2|_{I_p}$. By local class field theory we can also consider
$\omega$ and $\omega_2$ as characters of $\Q_p^\times$ and $\Q_{p^2}^\times$
respectively.

The following theorems were proved by global methods:

\begin{theorem}\label{thm:deligneserrefontaineedixhoven}

(1) (Deligne-Serre) If $k=1$ then $\rho_f$ is unramified at $p$,
and $\rho_f(\Frob_p)$ is semisimple with characteristic
polynomial $X^2-a_pX+\psi(p)$.

(2) (Deligne) If $k\geq2$
and $a_p$ is a $p$-adic unit then $\rho_f|_{D_p}$ is reducible
(and may or may not be semisimple), and the semisimplification
of $\rho_f|_{D_p}$ is isomorphic to
$\mu_{a_p^{-1}}\oplus\epsilon^{k-1}\mu_{a_p/\psi(p)}$.

(3) (Fontaine, Edixhoven) If $2\leq k\leq p+1$ and $a_p$
is not a $p$-adic unit then $\rhobar_f|_{D_p}$ is irreducible,
and $\rhobar_f|_{I_p}\cong\omega_2^{k-1}\oplus\omega_2^{p(k-1)}$.

\end{theorem}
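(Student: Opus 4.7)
The plan is to read off each assertion from the $p$-adic Hodge theory of $\rho_f|_{D_p}$, which is crystalline whenever $p\nmid N$. Specifically, I would work with the associated filtered $\varphi$-module $D:=D_{\cris}(\rho_f|_{D_p})$, a $2$-dimensional $\Qp$-vector space whose Frobenius has characteristic polynomial $X^2-a_pX+p^{k-1}\psi(p)$ and whose Hodge--Tate weights are $\{0,k-1\}$. All three parts then follow by combinatorial analysis of $D$ using its Newton and Hodge polygons.

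For part~(1), when $k=1$ both Hodge--Tate weights vanish, so the Hodge filtration on $D$ is trivial, and a crystalline representation with only the zero weight is automatically unramified. The trace and determinant of $\Frob_p$ are then determined from the global $\rho_f$ by Chebotarev density, giving the characteristic polynomial $X^2-a_pX+\psi(p)$. Semisimplicity of $\rho_f(\Frob_p)$ follows because $\rho_f$ has finite image, which in turn follows from the Deligne--Serre construction of the Artin representation attached to $f$ via congruences with higher-weight forms.

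For part~(2), when $v_p(a_p)=0$ the Newton polygon of $\varphi$ on $D$ has slopes $0$ and $k-1$; let $\alpha$ be the unit root and $\beta=p^{k-1}\psi(p)/\alpha$ the non-unit root of the characteristic polynomial. The Hodge filtration is a line $L\subset D$ placed in degrees $1$ through $k-1$, and weak admissibility of $D$ forbids $L$ from being the $\alpha$-eigenline, since that eigenline would then be a submodule with $t_H=k-1>0=t_N$. Consequently, on semisimplifying, $D$ decomposes as the sum of its two $\varphi$-eigenlines, producing an unramified character built from $\alpha$ and a Hodge--Tate weight $k-1$ character built from $\beta$. Translating through the Tate-module normalisation adopted in the paper yields the stated $\mu_{a_p^{-1}}\oplus\epsilon^{k-1}\mu_{a_p/\psi(p)}$.

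For part~(3), the Hodge--Tate weights $0$ and $k-1$ lie in $[0,p]$, the range where integral $p$-adic Hodge theory is tractable. I would choose a strongly divisible $\Z_p$-lattice in $D$, write the matrices of $\varphi$ and the Hodge filtration explicitly in terms of the Hecke data $(a_p,\psi(p),k)$, and reduce modulo $p$; because $a_p$ is not a $p$-adic unit, the reduction of $\varphi$ exchanges the two basis vectors up to scalars, and the resulting mod-$p$ filtered module is seen to correspond, under the Fontaine--Laffaille functor, to the irreducible representation $\ind_{\Gal(\Qpbar/\Q_{p^2})}^{\GQp}\omega_2^{k-1}$. Its restriction to inertia is $\omega_2^{k-1}\oplus\omega_2^{p(k-1)}$, as claimed. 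The main obstacle is the boundary case $k=p+1$, which sits just outside the original Fontaine--Laffaille range and requires Edixhoven's refinement of Fontaine's argument.
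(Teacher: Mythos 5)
The paper does not prove this theorem: it quotes it, and the remark immediately following attributes the three parts to Deligne--Serre~\cite{deligne-serre}, to a 1974 letter of Deligne (with a published account in~\cite{wiles-inv}), and to Fontaine and Edixhoven~\cite{edix:weights}, noting explicitly that the published proofs are \emph{global}, i.e.\ rely on the geometry of modular curves over~$\Fpbar$ and the arithmetic of mod~$p$ modular forms. Your proposal is therefore a genuinely different route, a purely local computation in $p$-adic Hodge theory starting from the filtered $\varphi$-module; that is more in the spirit of the \emph{rest} of this paper, which exists precisely because the global methods break down above weight $p+1$.

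Parts~(2) and~(3) of your sketch are plausible as local arguments. For~(2), the weak admissibility observation, that the unit $\varphi$-eigenline cannot lie in $\mathrm{Fil}^1$, is exactly what forces ordinarity; you would still need to track the dual carefully (the paper's conventions put the comparison on $D_{\cris}((\rho_f|_{D_p})^*)$, not on $D_{\cris}(\rho_f|_{D_p})$ as you write) and the arithmetic/geometric Frobenius normalisation to land on the stated characters. For~(3), what you describe is essentially Fontaine's own unpublished approach via strongly divisible / Fontaine--Laffaille modules, and you correctly flag $k=p+1$ as awkward; but Edixhoven's published argument is not a ``refinement of Fontaine's argument'' patching that boundary, it is a global proof via mod~$p$ modular forms and does not touch Fontaine--Laffaille theory. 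Also, the phrase ``is seen to correspond under the Fontaine--Laffaille functor to $\ind(\omega_2^{k-1})$'' is where all the content of~(3) actually sits and cannot be left as an aside.

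Part~(1) has a genuine gap: it is circular. You begin by taking $\rho_f|_{D_p}$ to be crystalline with both Hodge--Tate weights zero, but the comparison theorem that supplies crystallinity in this paper (Theorem~\ref{1.4}, Scholl--Faltings) applies only to $k\ge 2$, because only then is $\rho_f$ realised in the \'etale cohomology of a Kuga--Sato variety. For $k=1$ the representation is constructed by the Deligne--Serre congruence method, and its finite image, which you separately invoke to get semisimplicity of $\rho_f(\Frob_p)$, is the \emph{output} of that construction, not an input you may quote. Likewise the characteristic polynomial of $\Frob_p$ is pinned down by compatibility of $\rho_f$ within a compatible system at primes $\ell\neq p$, which is again part of what Deligne--Serre prove, not something Chebotarev density delivers on its own. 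As written, your argument for~(1) quietly assumes the Deligne--Serre theorem in order to deduce it.
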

\begin{remark} Part (1) of the theorem was proved by Deligne and Serre
in~\cite{deligne-serre}. 
Part (2) was proved in a~1974 letter from Deligne to Serre which
apparently has never been published, although published proofs
are now in the literature (see for example Theorem~2
of~\cite{wiles-inv}). Part (3) was proved (for $k\leq p$) in 1979
by Fontaine in two letters to Serre, but as far as we know the
first published proof was given by Edixhoven in~\cite{edix:weights}
and this proof uses global methods. Note finally the relative
strengths of the results: (1) and (2) describe the $p$-adic
representation, and (2) has no restrictions on the weight, whereas
(3) only describes the mod~$p$ representation and only for
small weights---this is because parts (1) and (2) concern
ordinary modular forms, and the situation is much more complicated
in the non-ordinary case.
\end{remark}

The problem with extending these global methods to the higher weight
non-ordinary case is that they typically rely on the arithmetic
of mod $p$ modular forms
and the geometry of modular curves over $\Fpbar$, and hence find
it very hard to distinguish between $a_p$s with positive
valuations. However for $k>p+1$ one can easily find examples
on a computer of modular forms $f_1$ and $f_2$ of the same
weight and level (prime to $p$) and character,
with $v(a_p(f_1))>0$, $v(a_p(f_2))>0$ and
$(\rhobar_{f_1}|_{I_p})^{\sss}\not\cong(\rhobar_{f_2}|_{I_p})^{\sss}$. In particular,
vaguely speaking, the mod $p$ Galois representation associated
to a modular form, locally at $p$,
depends on more than the mod $p$ reduction of the local data
attached to the form at $p$.

There is however a completely different and far more local
way of approaching the problem, which relies on a
coincidence in $p$-adic Hodge
Theory which is very specific to 2-dimensional representations
of $\GQp$. 
Let $f$ be a normalised cuspidal eigenform of level~$N$
prime to~$p$. Then $\rho_f|_{D_p}$ is a crystalline Galois representation,
and we would like to say something concrete about $\rhobar_f|_{D_p}$.
Because of Theorem~\ref{thm:deligneserrefontaineedixhoven} above,
we may now restrict our attention
to the case $k\geq2$ and $v(a_p)>0$. Moreover, after an
unramified twist we may assume $\psi(p)=1$.
 The coincidence is that if we furthermore assume that the roots
of $X^2-a_pX+p^{k-1}$ are distinct (or equivalently that $a_p^2\not=4p^{k-1}$,
an inequality which always holds if $k=2$ and conjecturally holds
for any $k\geq2$) then there is up to isomorphism a \emph{unique}
$\Qpbar$-vector space $D_{k,a_p}$
equipped with a semisimple linear Frobenius $\phi$ with
characteristic polynomial $X^2-a_pX+p^{k-1}$ and a weakly
admissible filtration with jumps at $0$ and $k-1$.
In this case we have the following
theorem which follows from Scholl's work on modular forms and
the $p$-adic comparison isomorphism:

\begin{theorem}\label{1.4} (Scholl, Faltings)

If $k\geq2$, if $v(a_p)>0$, and furthermore if
$a_p^2\not=4p^{k-1}$, then $D_{\cris}((\rho_f|_{D_p})^*)\cong D_{k,a_p}$.

\end{theorem}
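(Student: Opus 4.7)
The plan is to realise $\rho_f|_{D_p}$ geometrically, apply the $p$-adic comparison theorem, and check that the resulting filtered $\phi$-module matches the invariants uniquely characterising $D_{k,a_p}$.

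First I would invoke Scholl's construction, which realises $\rho_f$ as a Hecke-eigenspace cut out from the \'etale cohomology $H^{k-1}$ of a Kuga-Sato variety $W_k$: a smooth compactification of the $(k-2)$-fold fibre product of the universal elliptic curve over a modular curve of level $\Gamma_1(N)$. Since $p\nmid N$ this variety has good reduction at~$p$, so $\rho_f|_{D_p}$ is crystalline. Faltings's $p$-adic comparison isomorphism then identifies $D_{\cris}((\rho_f|_{D_p})^*)$, as a filtered $\phi$-module over $\Qpbar$, with the corresponding piece of the crystalline cohomology of the special fibre of $W_k$ (equivalently, with the algebraic de Rham cohomology of its generic fibre).

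Next I would read off the characteristic polynomial of the crystalline Frobenius by means of the Eichler-Shimura congruence relation on $W_k$: the Hecke correspondence $T_p$ on the special fibre decomposes as Frobenius plus a twisted Verschiebung, and combining this with the fact that $T_p$ acts as $a_p$ on the relevant eigenspace yields the identity $\phi^2 - a_p\phi + p^{k-1}=0$ on $D_{\cris}((\rho_f|_{D_p})^*)$. For the filtration I would appeal to the standard computation of Hodge numbers on Kuga-Sato varieties: on the cuspidal Hecke-isotypic piece the Hodge decomposition is concentrated in bidegrees $(k-1,0)$ and $(0,k-1)$, so the de Rham filtration is a two-step filtration with jumps at~$0$ and $k-1$. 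Weak admissibility is automatic from the existence of the Galois-stable lattice produced by $\rho_f$, and the hypothesis $a_p^2\not=4p^{k-1}$ forces the two Frobenius eigenvalues to be distinct, so $\phi$ is automatically semisimple. The uniqueness statement built into the definition of $D_{k,a_p}$ then delivers the desired isomorphism.

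The main obstacle is essentially bookkeeping: one has to match normalisations of Frobenius, duality and Hodge-Tate weights between Scholl's geometric picture and the conventions chosen in the statement, so that the Frobenius characteristic polynomial comes out precisely as $X^2-a_pX+p^{k-1}$ (rather than a twist of it) and the filtration jumps appear at $0$ and $k-1$ (rather than at some shifted pair). The substantive inputs---Scholl's construction of the motive attached to $f$, the $p$-adic comparison theorem, and the Eichler-Shimura relation in higher weight---are all available in the literature, and the uniqueness built into the definition of $D_{k,a_p}$ means that one only needs to check the list of invariants rather than construct an explicit isomorphism.
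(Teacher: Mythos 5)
Your outline is correct and is essentially the argument the paper itself appeals to: the paper gives no proof of Theorem~\ref{1.4}, attributing it to Scholl's construction of the motive attached to $f$ together with the $p$-adic comparison isomorphism of Faltings, which is exactly the route you sketch (Frobenius characteristic polynomial via Eichler--Shimura, filtration jumps at $0$ and $k-1$, semisimplicity from $a_p^2\not=4p^{k-1}$, and the uniqueness built into the definition of $D_{k,a_p}$). The normalisation/duality matching you flag as the remaining bookkeeping is indeed the only delicate point, and the paper acknowledges it in the remark that the dual is taken because of its conventions.
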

Note that the omitted case $a_p^2=4p^{k-1}$
does not occur if $k=2$ and should not occur if $k>2$ (it would contradict
a conjecture of Tate: see~\cite{coleman-edixhoven}). Note also that
we need to take the $\Qpbar$-dual of $\rho_f$; this is because of
our conventions.

Again under the assumptions $k\geq2$,
$a_p\in\Qpbar$ with $|a_p|<1$ and $a_p^2\not=4p^{k-1}$,
let $V_{k,a_p}$ denote the crystalline representation $V$
such that $D_{\cris}(V_{k,a_p}^*)\cong D_{k,a_p}$.
Our conclusion is that (with notation as above)
$\rho_f|_{D_p}\cong V_{k,a_p}$, and in particular
the mod $p$ reduction of $\rho_f|_{D_p}$ is determined by the local data
$(a_p,k)$ associated to $f$. But this reconstruction
is \emph{far} from explicit! It leads us to formulate the explicit
purely local

\begin{question} Say $k\geq2$ and $a_p\in\Qpbar$ with $v(a_p)>0$
and $a_p^2\not=4p^{k-1}$. What is the isomorphism
class of $\Vbar_{k,a_p}$ (the semisimplification of the
reduction of the $D_p$-representation $V_{k,a_p}$)
as an explicit function of $k$ and $a_p$?
\end{question}

A few years ago this question seemed to be regarded as almost intractible
for weights $k>p$: as far as we know, the only  results
for high weights were those of Berger, Li and Zhu (\cite{berger-li-zhu})
who showed that for $v(a_p)$ sufficiently large (an explicit bound
depending on $k$) the answer was the same as for the case $a_p=0$
(which was already known). Not only that, computational evidence
collected by one of us (KB) seemed to indicate that the answer
to the question was in general rather subtle.

However, recent work of Breuil, Berger
and Colmez on the $p$-adic and
mod~$p$ local Langlands correspondence for $\GL_2(\Q_p)$ gives us
a completely new approach for attacking this problem. We now summarise
what we need to know about the $p$-adic and mod $p$ Langlands
correspondence for $\GL_2(\Q_p)$ here (although we do not go into the details
of extension classes, an important subtlety which we will not need
here, and we shall only discuss the case
of crystalline $p$-adic representations; much
is now known in more general cases but we shall not need these results). 
Note also that from this point on, our notational conventions will
be that a bar over a $\Qpbar$-vector space (a Galois representation,
or a representation of $\GL_2(\Q_p)$) will mean
(some sensible variant of) ``take a stable lattice, reduce, and then
semisimplify'', whereas a bar over a ``lattice'' (for example
the $\Zpbar$-module $\Theta_{k,a_p}$ that we shall see later on) will
mean ``reduce mod~$p$ and don't semisimplify''.

Breuil in~\cite{breuil1}
has classified the irreducible smooth admissible representations
of $\GL_2(\Q_p)$ over $\Fpbar$. Furthermore he also wrote down an explicit
injective map $\Vbar\mapsto LL(\Vbar)$ from the set of isomorphism
classes of semisimple 2-dimensional $\Fpbar$-representations of $\GQp$
to the set of isomorphism classes of semisimple finite length admissible
smooth representations of $\GL_2(\Q_p)$, the idea being that this
map should be the ``correct'' version of the Local Langlands correspondence
in this setting. Note that the image
of the map $LL$ does not contain every irreducible representation
of $\GL_2(\Q_p)$ and it does contain some reducible ones. In particular
the situation is not quite as simple as the classical local Langlands
correspondence.

Now let $V$ denote an irreducible 2-dimensional crystalline representation
of $\GQp$ over~$\Qpbar$.
Berger and Breuil in~\cite{berger-breuil} associate to~$V$
a $p$-adic Banach space $B(V)$ equipped with a unitary action of $\GL_2(\Q_p)$,
such that $B(V)$ is topologically irreducible
(see Corollaire~5.3.2 of~\cite{berger-breuil}).
The map $B$ should be thought of as a $p$-adic local Langlands correspondence.
If one chooses an open $\GL_2(\Q_p)$-stable lattice in this Banach space
and then tensors
the lattice with $\Fpbar$, one obtains a finite length $\Fpbar$-representation
of $\GL_2(\Q_p)$ whose semisimplification $\Bbar(V)$ depends only on $V$. 
A crucial theorem of Berger (Th\'eor\`eme~A of~\cite{ber05})
is that $\Bbar(V)\cong LL(\Vbar)$,
where $\Vbar$ denotes the semisimplification of the
$\Fpbar$-reduction of $V$. In words,
the $p$-adic and mod $p$ local Langlands dictionaries
are compatible. In particular $\Bbar(V)$ actually only depends
on $\Vbar$.

The final ingredient in our approach is the following.
We have our data $(a_p,k)$ with as usual
$k\geq2$, $v(a_p)>0$ and $a_p^2\not=4p^{k-1}$.
Given this data, Breuil constructs an irreducible
(algebraic) $\Qpbar$-representation $\Pi_{k,a_p}$
of the group $\GL_2(\Q_p)$, and this representation
stabilises a lattice $\Theta_{k,a_p}$, so
$\Thetabar_{k,a_p}:=\Theta_{k,a_p}\otimes\Fpbar$
is an $\Fpbar$-representation of $\GL_2(\Qp)$ (we will see explicit
definitions later; the constructions were made in~\cite{breuil2}
and the proofs of the assertions we are making here are in loc.\ cit.\ and
in section~5 of~\cite{berger-breuil}).
Breuil and Berger have shown (again in section~5 of~\cite{berger-breuil})
that $B(V_{k,a_p})$ is
a certain completion of $\Pi_{k,a_p}$ and it follows that
$\Thetabar_{k,a_p}^{\sss}\cong \Bbar(V_{k,a_p})$. In particular,
$\Thetabar_{k,a_p}^{\sss}$ \emph{is in the image of $LL$}.
This opens up the possibility of computing it via a process of
elimination: if we examine the image of $LL$
and manage to rule out all but one element of it as a possibility
for $\Thetabar_{k,a_p}^{\sss}$ then we have computed
$\Thetabar_{k,a_p}^{\sss}$ and hence $\Vbar_{k,a_p}$!
Note that this approach
relies heavily on both Breuil's classification and the explicit
happy $(\phi,\Gamma)$-module coincidences for $\GL_2(\Q_p)$
implicit in Berger's work, and hence seems to be restricted
to the case of 2-dimensional representations of $\GQp$,
but it does enable us to give the first explicit computations
of $\Vbar_{k,a_p}$ valid in the non-ordinary
case where $v(a_p)$ is small and $k$ is unbounded. We remark
that Paskunas independently proved similar results using similar
ideas.

Note that if $1\leq t\leq p$ then $\ind(\omega_2^t)$ (induction
from $\Gal(\Qpbar/\Q_{p^2})$ to $\GQp$) is an irreducible
2-dimensional representation of $\GQp$ with determinant $\omega^t$ whose restriction to $I_p$ is $\omega_2^t\oplus\omega_2^{pt}$.
For $n\in\Z$ let $[n]$ denote the unique integer
in $\{0,1,2,\ldots,p-2\}$ congruent to~$n$ mod~$p-1$. We prove
the following result in this paper, via the technique explained above:

\begin{theorem}\label{thm:main}
If $k\geq 2$ and $0<v(a_p)<1$, and $t=[k-2]+1$, then either
$\overline{V}_{k,a_p}\cong\ind(\omega_2^t)$ is irreducible,
or $k\equiv3$~mod~$p-1$, and
$\overline{V}_{k,a_p}|_{I_p}\cong\omega\oplus\omega$.
\end{theorem}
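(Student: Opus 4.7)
The plan is to exploit the compatibility $\Thetabar_{k,a_p}^{\sss}\cong LL(\Vbar_{k,a_p})$ recalled in the introduction: since $LL$ is injective on isomorphism classes of semisimple representations, it suffices to compute $\Thetabar_{k,a_p}^{\sss}$ as a smooth admissible $\GL_2(\Qp)$-representation and then read off $\Vbar_{k,a_p}$ from Breuil's explicit description of the image of $LL$. The advantage of this reformulation is that $\Theta_{k,a_p}$ sits inside the very concrete representation $\Pi_{k,a_p}$, which admits an explicit presentation.

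Recall that $\Pi_{k,a_p}$ is the quotient of the compact induction $\ind_{KZ}^{G}\Symm^{k-2}\Qpbar^{2}$ by the relation $T=a_p$, where $G=\GL_2(\Qp)$, $K=\GL_2(\Zp)$, $Z$ is the centre, and $T$ is Breuil's standard Hecke operator. The lattice $\Theta_{k,a_p}$ is the image of the corresponding $\Zpbar$-lattice. Consequently $\Thetabar_{k,a_p}$ is naturally a quotient of the mod-$p$ compact induction $\ind_{KZ}^{G}\Symm^{k-2}\Fpbar^{2}$, at least modulo the image of $T-\overline{a}_p=T$ (since $\overline{a}_p=0$ whenever $v(a_p)>0$). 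The crucial subtlety is that $\Thetabar_{k,a_p}$ may satisfy \emph{further} relations beyond $T=0$: elements of the integral compact induction can become divisible by $p$ once $Tv=a_pv$ is imposed and so die in the reduction, and the hypothesis $v(a_p)<1$ is precisely what bounds how many such extra relations appear.

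The technical heart of the argument is an explicit Hecke calculation. Writing elements of $\Symm^{k-2}$ in the monomial basis $\{X^{i}Y^{k-2-i}\}$, one unwinds the standard coset-decomposition formula for $T$ to obtain, for each chosen $KZ$-supported function, a concrete expression in the compact induction. Iterating $Tv=a_pv$ and tracking $p$-adic valuations using $0<v(a_p)<1$, I expect to show that the natural map from $\ind_{KZ}^{G}(\Symm^{r}\Fpbar^{2}\otimes\det^{a})/T$ to $\Thetabar_{k,a_p}^{\sss}$ is surjective, where $r=[k-2]$ and $a$ is the twist imposed by the determinant $\omega^{k-1}$. By the Barthel-Livn\'e and Breuil classification of irreducible quotients of such mod-$p$ compact inductions, this source is irreducible supersingular outside a small list of degenerate $r$, and it corresponds under $LL^{-1}$ to $\ind(\omega_2^{r+1})=\ind(\omega_2^{t})$, yielding the first alternative of the theorem.

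The main obstacle, and the source of the dichotomy, is the single residual case $r=[k-2]=1$, i.e.\ $k\equiv 3\pmod{p-1}$. In this range the first nontrivial step of the recursion $Tv=a_pv$ produces an extra term whose reduction mod $p$ is sensitive not just to $\overline{a}_p=0$ but to the precise valuation of $a_p$; for $0<v(a_p)<1$ this can collapse the expected supersingular quotient into a reducible principal series on the $\GL_2(\Qp)$-side. Under $LL^{-1}$ such a principal series has inertial type $\omega\oplus\omega$, consistent with the determinant constraint $\det\Vbar_{k,a_p}=\omega^{2}$ in this congruence class. Carrying out the valuation bookkeeping rigorously, both to establish the surjectivity claim in the generic case and to pin down which two possibilities remain in the exceptional case, will be the computational heart of the paper; the dichotomy of the theorem then follows by cross-referencing the resulting list of candidates against Breuil's table for $LL$.
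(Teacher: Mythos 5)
Your global strategy is the paper's: translate the problem through the compatibility $\Thetabar_{k,a_p}^{\sss}\cong LL(\Vbar_{k,a_p})$, and then pin down $\Thetabar_{k,a_p}$ using its presentation as a quotient of a compact induction, with $0<v(a_p)<1$ entering through a valuation trick. But the technical core as you have set it up does not work. First, the inducing weight is wrong, and the "$/T$" you impose on it is exactly what cannot be proved. What the kernel computation actually yields (Lemmas~\ref{lem:imageofT} and~\ref{lem:smallvinfo}, via $T\cdot I(\sigma_{k-2})=I(\langle y^{k-2}\rangle_K)$ for $k-2\geq p$ and the element $f=[\Id,a_p^{-1}\theta g]$, whose $T$-image lands in $(p/a_p)$ times the integral lattice) is that the surjection $I(\sigma_{k-2})\to\Thetabar_{k,a_p}$ factors through $I(\sigma_s(k-2))$ with $s\equiv 2-k \pmod{p-1}$, i.e.\ through the \emph{companion} weight $p-1-[k-2]$ twisted by $\det^{k-2}$ (Corollary~\ref{cor:ashstevens}, via Ash--Stevens), not through $\sigma_{[k-2]}\otimes\det^a$. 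One can only rewrite the answer in terms of the weight $[k-2]$ a posteriori, using $\pi(u,0,\chi)\cong\pi(p-1-u,0,\chi\omega^u)$, i.e.\ \emph{after} knowing supersingularity --- which is precisely what is to be proved. Worse, your claimed surjection from $I(\sigma_{[k-2]}(a))/T$ onto $\Thetabar_{k,a_p}^{\sss}$ is false in the exceptional class: when $k\equiv 3\pmod{p-1}$ and $\Vbar_{k,a_p}$ is reducible, $\Thetabar^{\sss}$ has Jordan--Hoelder factors of $\pi(p-2,\lambda,\omega)$ and $\pi(p-2,\lambda^{-1},\omega)$, which for $p>3$ are not subquotients of $I(\sigma_1(a))$ at all (Lemmas~\ref{lem:modpequiv} and~\ref{3.2}); so no amount of valuation bookkeeping can establish your surjectivity claim uniformly, and in the generic case it cannot be obtained directly from the presentation of $\Theta_{k,a_p}$ either.

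Second, and relatedly, you are missing the elimination step that actually produces the dichotomy and locates it at $k\equiv 3\pmod{p-1}$. A quotient of $I(\sigma_u(a))$ (without any control of the new Hecke eigenvalue) is not automatically $\pi(u,0,\cdot)$; the needed input is that every Jordan--Hoelder factor of a finite-length quotient of $I(\sigma_u)$ is a subquotient of $\pi(u,\lambda',1)$ for some varying $\lambda'$ (Lemma~\ref{3.2}), combined with the explicit image of $LL$ (Theorem~\ref{thm:bergermodp}): in the reducible case both $\chi$ and $\chi\omega^{r+1}$ must then be unramified, forcing $u\equiv p-2$ and $\Vbar$ scalar on inertia (Proposition~\ref{prop:scalaroninertia}). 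With the correct weight $u=p-1-[k-2]$ this gives $[k-2]=1$, i.e.\ $k\equiv 3$, and inertia $\omega^{k-2}=\omega$; with your weight $u=[k-2]$ the same argument would instead place the reducible case at $k\equiv 1\pmod{p-1}$, contradicting the known reducible examples at $k=2p+1$ --- another sign that your surjection cannot be the backbone of the proof. Your proposed mechanism for the exceptional case ("the supersingular quotient collapses into a reducible principal series") is also internally inconsistent: any quotient of $I(\sigma_1(a))/T$ is the irreducible supersingular $\pi(1,0,\cdot)$ up to twist, so no such collapse can occur inside your framework. Finally, you should reduce to $k\geq p+3$ (the low weights being covered by Breuil--Berger), since the factorisation argument needs $k-2\geq p+1$.
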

Note that the theorem is true but vacuous if $p=2$. Note also that
for $p>2$, work of Breuil and Berger completely determines $V_{k,a_p}$
(for all $a_p$) in the case $k\leq 2p+1$
(see for example Th\'eor\`eme~3.2.1 of~\cite{ber05}).
Our contribution is hence
the case $k\geq 2p+2$ and $p>2$, where the assumption $v(a_p)<1$
of the theorem implies that the roots of $X^2-a_pX+p^{k-1}$ cannot
have ratio~1 or~$p$, which helps our exposition somewhat because
these correspond to ``special cases'' in the theory that sometimes
have to be dealt with separately.

If $k\not\equiv3$ modulo $p-1$ then the theorem tells you
$\overline{V}_{k,a_p}$, but if $k\equiv3$~mod~$p-1$ then there
are two possibilities in the conclusion of the theorem
and we know of no neat criterion to distinguish
between them. This initially surprising special
case actually has a simple global explanation.
Take a weight~3 newform of level $\Gamma_1(N)\cap\Gamma_0(p)$; it
will have slope $1/2$. Moreover the local mod $p$ representation
attached to this newform is ``hard'' to determine, because it
requires knowledge of the $\cal{L}$-invariant of the form.
Now any classical eigenform in a Coleman family sufficiently close
to this newform will be old at $p$, have weight congruent to~3
mod~$p-1$, and will still have slope $1/2$,
and hence the mod $p$ Galois representation attached to this eigenform
is also ``hard'' to determine and in particular will depend on
more than the slope of the form. On the positive side, if $p>2$
then we at least know what is happening for $k$ small: if $k=3$
or $k=p+2$ then the reducible case can never occur, and if $k=2p+1$
then a computation of Breuil explains exactly when the reducible
case occurs: see Th\'eor\`eme~3.2.1 of~\cite{ber05}.

\section{The mod $p$ and $p$-adic Langlands correspondences for $\GL_2(\Q_p)$.}

We recall some results on the mod~$p$ representations of $\GL_2(\Q_p)$.
Nothing in this section is due to the authors.
Let $p$ be a prime, let $\Zpbar$ be the integers in $\Qpbar$
and let $\Fpbar$ be the residue
field of $\Zpbar$. Say $r\in\Z_{\geq0}$.
Let $K$ be the group $\GL_2(\Z_p)$, and for
$R$ a $\Z_p$-algebra let $\Symm^r(R^2)$ denote the space
$\oplus_{i=0}^rRx^{r-i}y^i$ of homogeneous polynomials in two variables
$x$ and $y$, with the action of $K$ given by
$$\begin{pmatrix}a&b\\c&d\end{pmatrix}x^{r-i}y^i=(ax+cy)^{r-i}(bx+dy)^i.$$

Set $G=\GL_2(\Qp)$, and let $Z$ be its centre. If $V$ is an $R$-module
with an action of $K$, then extend
the action of $K$ to the group $KZ$ by letting $\smallmat{p}{0}{0}{p}$
act trivially, and let $I(V)$ denote the representation
$\ind_{KZ}^G(V)$ (compact induction).
Explicitly, $I(V)$ is the space of functions
$f:G\to V$ which have compact support modulo $Z$ and which
satisfy $f(kg)=k.(f(g))$ for all $k\in KZ$. This space has
a natural action of $G$, defined by $(gf)(\gamma)=f(\gamma g)$.
Note that \S2.2 of~\cite{barthel-livne} explains how an $R$-linear
$G$-endomorphism
of $I(V)$ can be interpreted as a certain function $G\to\End_R(V)$
(by Frobenius reciprocity). 

If $V=\Symm^r(R^2)$ for some integer $r\geq0$ and $\Z_p$-algebra $R$,
then, using
the dictionary mentioned above, Barthel and Livn\'e identified a certain
endomorphism $T$ of $I(V)$ which corresponds to the function
$G\to\End_R(V)$ which is supported on $KZ\smallmat{p}{0}{0}{1}KZ$ and
sends $\smallmat{p}{0}{0}{1}$ to the endomorphism of $\Symm^r(R^2)$
sending $F(x,y)$ to $F(px,y)$.
We refer to Lemme~2.1.4.1 of~\cite{breuil2}
and the remarks following this lemma
for many basic facts about this endomorphism.

We now recall the classification of smooth irreducible mod~$p$
representations of $\GL_2(\Q_p)$, due to Breuil and Barthel-Livn\'e.
For $r,n\in\Z_{\geq0}$ set $\sigma_r:=\Symm^r(\Fpbar^2)$ and set
$\sigma_r(n):=\det{}^{n}\otimes\Symm^r(\Fpbar^2)$. These are
$\Fpbar$-representations of~$K$, irreducible if $0\leq r\leq p-1$.
If $R$ is a ring and $t\in R^\times$ then let $\mu_t$ denote the
map $\GL_1(\Qp)\to R^\times$ which is trivial on $\Zp^\times$ and which
sends $p$ to $t$.
If $\chi:\Qp^\times\to\Fpbar^\times$ is a character, if
$\lambda\in\Fpbar$ and if $0\leq r\leq p-1$ then define
$$\pi(r,\lambda,\chi):=\left(I(\sigma_r)/(T-\lambda)\right)\otimes
(\chi\circ\det).$$ The classification is due to Barthel-Livne and
Breuil, and is as follows (see for example Th\'eor\`eme~2.7.1
of~\cite{breuil1}, which summarises results in~\cite{barthel-livne},
and also Corollaire~4.1.1, Corollaire~4.1.4 and Corollaire~4.1.5
of loc.\ cit.):

(i) If $\lambda\not=0$ and $(r,\lambda)\not\in\{(0,\pm1),(p-1,\pm1)\}$
then $\pi(r,\lambda,\chi)$ is irreducible.

(ii) There is a certain infinite-dimensional representation $\St$
of $G$, called the Steinberg representation. For $r\in\{0,p-1\}$
and $\lambda=\pm1$,
$\pi(r,\lambda,\chi)$ has two Jordan-Hoelder factors, one
1-dimensional and isomorphic to $\chi\mu_\lambda\circ\det$ and the
other equal to a twist of the Steinberg representation by this
same character. 

(iii) The representations $\pi(r,0,\chi)$ are all irreducible,
and are called the supersingular representations of $G$ (we remark
that this is the result due to Breuil and it is this part which
does not generalise to local fields other than $\Q_p$). No Jordan-Hoelder
factor of $\pi(r,\lambda,\chi)$ with $\lambda\not=0$ is supersingular.

(iv) We have just seen that all the representations $\pi(r,\lambda,\chi)$
have finite length. Conversely, any smooth irreducible $\Fpbar$-representation
of $G$ with a central character is a Jordan-Hoelder constituent
of some $\pi(r,\lambda,\chi)$.

(v) The only isomorphisms between the $\pi(r,\lambda,\chi)$
are the following:

(a) If $\lambda\not=0$ and  $(r,\lambda)\not\in\{(0,\pm1),(p-1,\pm1)\}$,
then
$\pi(r,\lambda,\chi)\cong\pi(r,-\lambda,\chi\mu_{-1})$,

(b) If $\lambda\not=0$ and $\lambda\not=\pm1$
then $\pi(0,\lambda,\chi)\cong\pi(p-1,\lambda,\chi)$
(and these are also isomorphic
to $\pi(0,-\lambda,\mu_{-1}\chi)$ and $\pi(p-1,-\lambda,\mu_{-1}\chi)$ as
already mentioned).

(c) $\pi(r,0,\chi)\cong\pi(r,0,\chi\mu_{-1})\cong\pi(p-1-r,0,\chi\omega^r)\cong
\pi(p-1-r,0,\chi\omega^r\mu_{-1})$.

Note that the Jordan-Hoelder factors
of $\pi(0,\lambda,\chi)$ and $\pi(p-1,\lambda,\chi)$ coincide
even if $\lambda=\pm1$.

We now move on to the $p$-adic part of the story.
Say $k\geq2$ and $a_p\in\Zpbar$ with $|a_p|<1$, as usual.
We now furthermore make the assumption that the roots of $X^2-a_pX+p^{k-1}$
do not have ratio~1 or~$p$; in other words we assume $a_p^2\not=4p^{k-1}$
and $a_p\not=\pm(1+p)p^{(k-2)/2}$. These assumptions are not always
necessary, but they make for a slightly cleaner exposition, and
in our final application we have $k\geq p+2$ and $v(a_p)<1$
and so they will be satisfied.

\begin{defn}Let $\Pi_{k,a_p}:=\ind_{KZ}^G\Symm^{k-2}(\Qpbar^2)/(T-a_p)$
(compact
induction, as before), and let  $\Theta_{k,a_p}$ be the image
of $\ind_{KZ}^G\Symm^{k-2}(\Zpbar^2)$ in
$\Pi_{k,a_p}$.

\end{defn}

Alternatively, $\Theta_{k,a_p}$ is
the quotient of $\ind_{KZ}^G\Symm^{k-2}(\Zpbar^2)/(T-a_p)$ by its torsion.
Then $\Pi_{k,a_p}$ is irreducible
by Proposition~3.3(i) of~\cite{breuil2} and $\Theta_{k,a_p}$ is
a lattice in it, by Corollaire 5.3.4 of \cite{berger-breuil}.

The $p$-adic Langlands correspondence associates a
unitary $p$-adic Banach space representation $B(V)$ of
$\GL_{2}(\Q_{p})$ to an irreducible crystalline representation $V$,
and $B(V_{k,a_p})$ (under our assumptions on $k$ and $a_p$ above)
is isomorphic to the completion of $\Pi_{k,a_{p}}$ with
respect to the gauge
of $\Theta_{k,a_{p}}$. We deduce that $\Bbar(V_{k,a_p})$
(the semisimplification of the reduction of an open $\GL_2(\Q_p)$-stable
lattice in $B(V_{k,a_p})$) is
isomorphic to $\Thetabar_{k,a_p}^{\sss}$, where
$\Thetabar_{k,a_p}:=\Theta_{k,a_p}\otimes\Fpbar$. We now recall
the explicit mod $p$ local Langlands correspondence for $\GL_2(\Q_p)$
formulated by Breuil and the compatibility of the mod~$p$
and $p$-adic Langlands correspondences for trianguline representations
proved by Berger (see
Th\'eor\`eme~A of~\cite{ber05}), and in particular what these things
tell us about the situation in hand.
Let $[x]$ be
the unique integer in $[0,p-2]$ congruent to $x\in\Z$
modulo $p-1$.

\begin{theorem}\label{thm:bergermodp}

We have
$$\overline{V}\cong  (\ind(\omega_{2}^{r+1}))\otimes \chi
\iff
\Bbar(V)\cong \pi(r,0,\chi) $$
and
$$\overline{V}\cong  (\mu_{\lambda}\omega^{r+1}\oplus \mu_{\lambda^{-1}})\otimes \chi
\iff
\Bbar(V)\cong \pi(r,\lambda,\chi)^{ss}\oplus\pi([p-3-r],\lambda^{-1},\chi\omega^{r+1})^{ss}.$$
\end{theorem}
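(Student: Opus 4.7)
The plan is to combine two ingredients already stated earlier in the excerpt. Breuil's work \cite{breuil1} defines an explicit injective map $LL$ from semisimple 2-dimensional $\Fpbar$-representations of $\GQp$ to semisimple finite-length admissible representations of $G=\GL_2(\Qp)$, precisely by the two formulas appearing on the right-hand side of the theorem. Berger's compatibility theorem (Théorème~A of \cite{ber05}) then asserts $\Bbar(V)\cong LL(\Vbar)$ for trianguline $V$, which covers the crystalline case at hand. Granting both of these, the theorem reduces to reading off Breuil's definition on the two possible shapes of $\Vbar$ and checking that each formula is well-defined under the intrinsic ambiguities in how one writes $\Vbar$ itself.

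For the irreducible case $\Vbar\cong\ind(\omega_2^{r+1})\otimes\chi$, Breuil simply sets $LL(\Vbar)=\pi(r,0,\chi)$; the only consistency to verify is that the ambiguity in the choice of fundamental character of niveau~$2$ (namely $\omega_2$ versus $\omega_2^p$) matches the isomorphism $\pi(r,0,\chi)\cong\pi(p-1-r,0,\chi\omega^r)$ of item~(v)(c) in the classification. This is a short computation tracking the effect on exponents of $\omega_2$ together with the resulting twist on the determinant.

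For the reducible case $\Vbar\cong(\mu_\lambda\omega^{r+1}\oplus\mu_{\lambda^{-1}})\otimes\chi$, Breuil defines $LL(\Vbar)$ as the semisimplification of the full principal series parabolically induced from the unordered pair of characters. Any such $\Vbar$ admits two orderings: swapping summands yields $(\mu_{\lambda^{-1}}\chi)\oplus(\mu_\lambda\omega^{r+1}\chi)$, and rewriting this in the normal form $(\mu_{\lambda'}\omega^{r'+1}\chi')\oplus(\mu_{\lambda'^{-1}}\chi')$ forces $\lambda'=\lambda^{-1}$, $\chi'=\chi\omega^{r+1}$ and $r'+1\equiv -(r+1)\pmod{p-1}$, so $r'=[p-3-r]$. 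The two summands of $LL(\Vbar)$ are therefore $\pi(r,\lambda,\chi)^{\sss}$ and $\pi([p-3-r],\lambda^{-1},\chi\omega^{r+1})^{\sss}$, which is exactly the claimed formula. The $^{\sss}$ superscripts become relevant only in the degenerate cases of item~(ii), where a $\pi$ decomposes as a character and a Steinberg twist; in the generic cases of item~(i) each $\pi$ is already irreducible and the superscript is cosmetic.

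The main obstacle is Berger's compatibility theorem itself, which one imports as a black box from \cite{ber05}; beyond that, the argument is bookkeeping. The principal subtlety to watch is normalisation conventions, namely geometric versus arithmetic Frobenii, the reading of $\mu_\lambda$ and $\omega$ as characters of $\Qp^{\times}$ via local class field theory, and the dualising twist implicit in passing between $D_{\cris}((\rho_f|_{D_p})^{*})$ and the Banach space side. All of these have been pinned down in the preceding pages of the excerpt, so there is no genuine ambiguity left to resolve.
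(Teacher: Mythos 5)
Your proposal matches the paper's treatment: the paper gives no independent proof of this theorem, but states it as a recollection of Breuil's explicit mod~$p$ correspondence $LL$ together with Berger's compatibility result (Th\'eor\`eme~A of~\cite{ber05}), which is exactly the pair of black boxes you invoke. The remaining bookkeeping you describe (normal forms for $\Vbar$, the $\omega_2$ versus $\omega_2^p$ ambiguity, and the identification $r'=[p-3-r]$) is consistent with the classification facts (v)(a)--(c) quoted in the paper, so nothing is missing.
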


In particular, if $k\geq2$, $a_p\in\Qpbar$ with $v(a_p)>0$,
and if the roots of $X^2-a_pX+p^{k-1}$ don't have ratio $1$ or $p^{\pm1}$,
we have
$$\overline{V}_{k,a_{p}}\cong  (\ind(\omega_{2}^{r+1}))\otimes \chi
\iff
(\Thetabar_{k,a_p})^{\sss}\cong \pi(r,0,\chi) $$
and
$$\overline{V}_{k,a_{p}}\cong  (\mu_{\lambda}\omega^{r+1}\oplus \mu_{\lambda^{-1}})\otimes \chi
\iff
(\Thetabar_{k,a_p})^{\sss}\cong \pi(r,\lambda,\chi)^{\sss}\oplus\pi([p-3-r],\lambda^{-1},\chi\omega^{r+1})^{\sss}.$$

\section{Lemmas about mod~$p$ representations of $\GL_2(\Q_p)$.}

Our strategy for proving Theorem~\ref{thm:main}
is to compute $\Vbar_{k,a_p}$ when $0<v(a_p)<1$ by
analysing $\Thetabar_{k,a_p}$ and its possible Jordan-Hoelder factors.
The following lemma follows
directly from the
explicit description of the Jordan-Hoelder factors of $\pi(r,\lambda,\chi)$
given in the previous section.
\begin{lemma}\label{lem:modpequiv} If $\lambda\not=0$
and $\pi(r,\lambda,\chi)$
and $\pi(r',\lambda',\chi')$ have a common Jordan-Hoelder factor, then $\lambda'\not=0$, $r\equiv r'$ mod $p-1$,
and $\chi/\chi'$ is unramified.
\end{lemma}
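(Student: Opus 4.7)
The plan is a direct case analysis driven by the classification of irreducible mod~$p$ representations and the explicit description of the Jordan--H\"older constituents of the $\pi(r,\lambda,\chi)$ recorded in Section~2. First I would dispose of the possibility $\lambda'=0$: in that case $\pi(r',0,\chi')$ is itself the irreducible supersingular representation of item~(iii), so the hypothetical shared constituent would have to be supersingular, contradicting the statement in~(iii) that no constituent of $\pi(r,\lambda,\chi)$ with $\lambda\neq 0$ is supersingular.

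Once $\lambda,\lambda'\neq 0$, I would split according to whether each representation lies in the reducible range $(r,\lambda),(r',\lambda')\in\{(0,\pm 1),(p-1,\pm 1)\}$. If both are reducible, then $r,r'\in\{0,p-1\}$ already forces $r\equiv r'\pmod{p-1}$, while matching either the one-dimensional constituent $\chi\mu_\lambda\circ\det$ or the Steinberg twist $\St\otimes(\chi\mu_\lambda\circ\det)$ across the two representations forces $\chi\mu_\lambda=\chi'\mu_{\lambda'}$, whence $\chi/\chi'=\mu_{\lambda'/\lambda}$ is unramified. If both are irreducible, the shared constituent forces $\pi(r,\lambda,\chi)\cong\pi(r',\lambda',\chi')$, and checking items (v)(a) and (v)(b) (item~(v)(c) concerns only supersingulars) shows that the only moves are $\chi\mapsto\chi\mu_{-1}$ and the swap $r\leftrightarrow p-1$ when $r\in\{0,p-1\}$, both of which respect the required congruence and unramifiedness of $\chi/\chi'$.

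The main obstacle I anticipate is the mixed case, in which one representation is irreducible and the other is reducible. A common constituent would then have to be either one-dimensional---impossible because an irreducible $\pi(r,\lambda,\chi)$ with $\lambda\neq 0$ is infinite dimensional---or a Steinberg twist. Ruling out the latter requires the standard disjointness of the four isomorphism types of irreducible smooth mod-$p$ representations of $G$ (one-dimensional characters, Steinberg twists, irreducible principal series, and supersingulars), so that an irreducible principal series $\pi(r,\lambda,\chi)$ with $(r,\lambda)\notin\{(0,\pm 1),(p-1,\pm 1)\}$ cannot coincide with a Steinberg twist. With this disjointness in hand the mixed case does not occur, and the lemma follows.
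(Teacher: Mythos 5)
Your case analysis is correct and is essentially the argument the paper intends: the paper gives no written proof at all, asserting only that the lemma follows directly from the classification recalled in Section~2, and your proof simply spells out that deduction (supersingularity rules out $\lambda'=0$; the special and generic cases are matched via the listed Jordan--H\"older factors and the isomorphisms (v)(a),(b)). The one extra input you flag for the mixed case --- that an irreducible $\pi(r,\lambda,\chi)$ with generic parameters is not a twist of Steinberg --- is indeed part of the cited classification (Th\'eor\`eme~2.7.1 of~\cite{breuil1}, after Barthel--Livn\'e), so invoking it is legitimate.
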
\hfill$\square$

The next lemma is a straightforward strengthening of Proposition~32
of~\cite{barthel-livne}.

\begin{lemma}\label{3.2} If $0\leq r\leq p-1$
and $F$ is a quotient of $I(\sigma_r)$ which has finite
length as an $\Fpbar[G]$-module, then every Jordan-Hoelder factor
of $F$ is a subquotient of
$I(\sigma_r)/(T-\lambda)=\pi(r,\lambda,1)$ for some $\lambda$ (with $\lambda$
possibly depending on the factor).
\end{lemma}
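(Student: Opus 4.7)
The plan is to reduce to the irreducible-quotient case handled in Proposition~32 of~\cite{barthel-livne} by showing that $F$ is a quotient of $I(\sigma_r)/P(T)I(\sigma_r)$ for some nonzero polynomial $P(T)\in\Fpbar[T]$, and then filtering according to the factorisation of $P$. The key preliminary I will establish is that $\operatorname{Hom}_G(I(\sigma_r),F)$ is finite-dimensional over $\Fpbar$. By Frobenius reciprocity for compact induction this space equals $\operatorname{Hom}_{KZ}(\sigma_r,F|_{KZ})$; since $\sigma_r=\Symm^r(\Fpbar^2)$ has trivial action of the first principal congruence subgroup $K_1:=1+pM_2(\Zp)$ (which is normal in $KZ$), any such $KZ$-equivariant map factors through $F^{K_1}$, so the Hom-space equals $\operatorname{Hom}_{KZ/K_1}(\sigma_r,F^{K_1})$. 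By the classification recalled in Section~2 every irreducible smooth $\Fpbar$-representation of $G$ is admissible, and since $F$ has finite length it is admissible too; hence $F^{K_1}$ is finite-dimensional and so is our Hom-space.

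Next I will invoke the Barthel-Livn\'e identification $\End_G(I(\sigma_r))=\Fpbar[T]$. Let $f\colon I(\sigma_r)\twoheadrightarrow F$ be the given surjection, and consider the cyclic $\Fpbar[T]$-submodule $\Fpbar[T]\cdot f\subseteq\operatorname{Hom}_G(I(\sigma_r),F)$. Since the ambient space is finite-dimensional, some nonzero polynomial $P(T)$ annihilates $f$; equivalently $f(P(T)I(\sigma_r))=0$, so $F$ is a quotient of $I(\sigma_r)/P(T)I(\sigma_r)$.

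To conclude, factor $P(T)=c\prod_{i=1}^{m}(T-\lambda_i)^{n_i}$ with the $\lambda_i\in\Fpbar$ distinct. I will filter $I(\sigma_r)/P(T)I(\sigma_r)$ by the $G$-submodules $(\prod_{j\leq i}(T-\lambda_j)^{n_j})I(\sigma_r)/P(T)I(\sigma_r)$; the $i$-th successive quotient is a quotient of $I(\sigma_r)/(T-\lambda_i)^{n_i}I(\sigma_r)$, which itself carries a filtration by the images of $(T-\lambda_i)^{\ell}I(\sigma_r)$ whose successive quotients are each quotients of $I(\sigma_r)/(T-\lambda_i)I(\sigma_r)=\pi(r,\lambda_i,1)$. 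Stringing these two filtrations together shows that every Jordan-Hoelder factor of $I(\sigma_r)/P(T)I(\sigma_r)$, and hence of its further quotient $F$, appears as a subquotient of some $\pi(r,\lambda_i,1)$, which is exactly what is required. The only step that needs real care is the finite-dimensionality of $\operatorname{Hom}_G(I(\sigma_r),F)$, which rests on the admissibility of $F$; after that, the argument is formal manipulation with the Hecke algebra $\Fpbar[T]=\End_G(I(\sigma_r))$ and telescoping filtrations.
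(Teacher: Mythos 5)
Your proof is correct, but it takes a genuinely different route from the paper's. The paper argues by induction on the length of $F$: an irreducible quotient of $I(\sigma_r)$ factors through some $\pi(r,\lambda,1)$ (Th\'eor\`eme~2.7.1(i) of Breuil), and a snake-lemma comparison of $I(\sigma_r)\xrightarrow{T-\lambda}I(\sigma_r)\twoheadrightarrow\pi(r,\lambda,1)$ with $0\to K\to F\to J\to 0$ pushes the problem down to the kernel $K$, which has smaller length. You instead annihilate the given surjection $f\colon I(\sigma_r)\twoheadrightarrow F$ by a single nonzero polynomial $P(T)$, using the finite-dimensionality of $\operatorname{Hom}_G(I(\sigma_r),F)\cong\operatorname{Hom}_{KZ}(\sigma_r,F|_{KZ})\subseteq\operatorname{Hom}(\sigma_r,F^{K_1})$ with $K_1:=1+pM_2(\Zp)$, and then run the telescoping filtration of $I(\sigma_r)/P(T)I(\sigma_r)$ by partial products of $P$ and by powers of $T-\lambda_i$; this is a clean, purely module-theoretic argument over the Hecke algebra $\End_G(I(\sigma_r))=\Fpbar[T]$ of Barthel--Livn\'e (and in fact you only need that $T$ is an endomorphism, not the full structure of the endomorphism ring). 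The extra input your route requires is admissibility of $F$: strictly speaking, what Section~2 recalls is the classification of irreducibles \emph{with a central character}, and the admissibility of the $\pi(r,\lambda,\chi)$ and their constituents is a theorem of Barthel--Livn\'e and Breuil rather than something restated there. The point is easily supplied---$F$ is a quotient of $I(\sigma_r)$, so the centre acts on it through a character, its Jordan--H\"older factors are therefore constituents of various $\pi(r',\lambda,\chi)$, these are admissible, and admissibility is preserved under extensions---but it should be said explicitly. By contrast, the paper's induction only needs the factorisation statement for irreducible quotients (whose proof in the literature is an eigenvector argument of the same Hecke-theoretic flavour as yours); in exchange, your argument records the slightly stronger conclusion, not stated in the paper, that $F$ is a quotient of $I(\sigma_r)/P(T)I(\sigma_r)$ for a single polynomial $P$.
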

\begin{proof} Induction on the length of $F$. If $F$ is irreducible
then Th\'eor\`eme~2.7.1(i) of~\cite{breuil1} shows that in fact $F$
is a quotient of some $\pi(r,\lambda,1)$, and in particular a subquotient.
Assume now that $F$ has
length greater than one, so that $F$ has
an irreducible quotient $J$, and the kernel $K$ has smaller length.
Again by loc.\ cit., the composite map $I(\sigma_r)\to F\to J$ factors as
$I(\sigma_r)\to\pi(r,\lambda,1)\to J$ for some $\lambda$. Let $\gamma$
denote this latter arrow.

Now consider the following commutative diagram:

$$\xymatrix{& I(\sigma_r)\ar[r]^{T-\lambda}\ar[d]^{\alpha}& I(\sigma_r)\ar[r]\ar[d]^{\beta}& \pi(r,\lambda,1)\ar[r]\ar[d]^{\gamma}& 0  \\ 
0\ar[r] & K\ar[r]& F\ar[r]& J\ar[r]& 0 }$$

Since $\beta$ is surjective, the snake lemma implies that $\coker(\alpha)$ is
a subquotient of $\pi(r,\lambda,1)$ and hence
each of its Jordan-Hoelder factors are too.
Moreover, the inductive hypothesis
tells us that every Jordan-Hoelder factor of $\im(\alpha)$ is a subquotient
of $\pi(r,\lambda',1)$ for some $\lambda'$ (depending on the factor),
hence any Jordan-Hoelder factor of $K$ and hence of $F$ is a subquotient
of $\pi(r,\lambda',1)$ for some $\lambda'$ and we are done.
\end{proof}

The following proposition will be crucial for us. It uses the
previous lemma to conclude
that we can say a lot about a finite length
$\Fpbar$-representation of $\GL_2(\Q_p)$ which is
a quotient of $I(W)$ for some \emph{irreducible} $W$,
and whose semisimplification is in the image of $LL$. Note
that any supersingular representation of $\GL_2(\Q_p)$ is
a quotient of $I(W)$ for some irreducible $W$; the point
is that almost no other elements of the image of $LL$ are.
Recall that if $W$ is an irreducible $\Fpbar$-representation
of $K$ then $W=\sigma_s(n)$ for some $0\leq s\leq p-1$.

\begin{proposition}\label{prop:scalaroninertia}

If $V$ is an irreducible crystalline
representation of $\GQp$, if $\Theta$ is an open $\GL_2(\Q_p)$-stable
lattice in $B(V)$ and if there is a surjection
$I(\sigma_s(n))\to\Thetabar$ for some $s$ with $0\leq s\leq p-1$,
then either $\Vbar\cong\ind(\omega_2^{s+1+(p+1)n})$ is irreducible,
or $\Vbar$ is reducible and scalar on inertia. If $p>2$
then in the reducible case we must furthermore have $s=p-2$ and
$\Vbar|_{I_p}=\omega^n\oplus\omega^n$.
\end{proposition}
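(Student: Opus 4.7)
The plan is to reduce to the case $n=0$ by a twist, then combine Lemmas~\ref{lem:modpequiv} and~\ref{3.2} with Theorem~\ref{thm:bergermodp} to constrain $\overline{V}$. Tensoring by $\omega^{-n}\circ\det$ gives $I(\sigma_s(n))\otimes(\omega^{-n}\circ\det)\cong I(\sigma_s)$, so there is a surjection $I(\sigma_s)\to\overline{\Theta}\otimes(\omega^{-n}\circ\det)$; the $\GL_2(\Q_p)$-side twist by $\omega^{-n}\circ\det$ matches the Galois-side twist by $\omega^{-n}$ under the $p$-adic local Langlands correspondence, so this target is the reduction of an open $\GL_2(\Q_p)$-stable lattice in $B(V\otimes\omega^{-n})$. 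Replacing $V$ by $V\otimes\omega^{-n}$ we may therefore assume $n=0$, and it suffices to prove that either $\overline{V}\cong\ind(\omega_2^{s+1})$, or $\overline{V}$ is reducible with $\overline{V}|_{I_p}$ trivial (and, for $p>2$, $s=p-2$). Since $\overline{\Theta}^{\sss}\cong\overline{B}(V)^{\sss}$ has finite length by Theorem~\ref{thm:bergermodp}, Lemma~\ref{3.2} applied to $\overline{\Theta}$ tells us every Jordan--H\"older factor of $\overline{B}(V)^{\sss}$ is a subquotient of some $\pi(s,\lambda,1)$.

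Suppose $\overline{V}$ is irreducible. Then Theorem~\ref{thm:bergermodp} gives $\overline{B}(V)^{\sss}\cong\pi(r,0,\chi)$, which is irreducible and supersingular. By part~(iii) of the classification a supersingular is a subquotient of $\pi(s,\lambda,1)$ only if $\lambda=0$, in which case it is the whole thing; hence $\pi(r,0,\chi)\cong\pi(s,0,1)$. Running the other direction of Theorem~\ref{thm:bergermodp} forces $\overline{V}\cong\ind(\omega_2^{s+1})$, as desired. Suppose instead $\overline{V}$ is reducible; then Theorem~\ref{thm:bergermodp} gives $\overline{B}(V)^{\sss}\cong\pi(r,\lambda,\chi)^{\sss}\oplus\pi([p-3-r],\lambda^{-1},\chi\omega^{r+1})^{\sss}$ with $\lambda\neq 0$, and by part~(iii) no Jordan--H\"older factor is supersingular. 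Each summand therefore shares a Jordan--H\"older factor with some $\pi(s,\lambda',1)$ with $\lambda'\neq 0$, so Lemma~\ref{lem:modpequiv} forces $r\equiv s\equiv[p-3-r]\pmod{p-1}$ together with both $\chi$ and $\chi\omega^{r+1}$ unramified, whence $\omega^{r+1}$ is unramified. For $p>2$ this yields $(p-1)\mid(r+1)$ and so $r=p-2=s$; writing $\chi=\mu_\alpha$ and using $\omega^{p-1}=1$, we get $\overline{V}\cong\mu_{\alpha\lambda}\oplus\mu_{\alpha/\lambda}$, which is unramified. Undoing the twist by $\omega^{-n}$ restores the conclusion $\overline{V}|_{I_p}\cong\omega^n\oplus\omega^n$ in the reducible case.

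The main obstacle is the bookkeeping around the twist: one must verify that tensoring by $\omega^{-n}\circ\det$ on the $\GL_2(\Q_p)$ side corresponds, via the Berger--Breuil compatibility of the $p$-adic and mod-$p$ local Langlands correspondences, to the Galois-side twist by $\omega^{-n}$, and that this is compatible with passage to an open lattice and reduction mod $p$. Once this is in hand, the rest of the argument is a mechanical application of the classification recalled in Section~2.
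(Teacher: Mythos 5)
Your argument is correct and takes essentially the same route as the paper's proof: twist to reduce to $n=0$, apply Lemma~\ref{3.2} to see that every Jordan--H\"older factor of $\Thetabar$ is a subquotient of some $\pi(s,\lambda',1)$, and then use the classification together with Lemma~\ref{lem:modpequiv} and Theorem~\ref{thm:bergermodp} to pin down $\Vbar$ in the irreducible and reducible cases. The only cosmetic differences are that you case-split on $\Vbar$ rather than on $\Thetabar^{\sss}$ (equivalent via Theorem~\ref{thm:bergermodp}) and that the finite-length statement you invoke really comes from the Berger--Breuil theory recalled in the introduction rather than from Theorem~\ref{thm:bergermodp} itself.
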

\begin{remark} If $p=2$ then this proposition is vacuous, as if $p=2$
then \emph{every}
semisimple 2-dimensional mod $p$ representation of $\Gal(\Qpbar/\Qp)$
is either irreducible or scalar on inertia.
\end{remark}
\begin{proof} Firstly observe that
$I(\sigma_s(n))\cong I(\sigma_s)\otimes(\omega\circ\det)^n$ as
$\GL_2(\Q_p)$-representations, so (twisting~$V$ by an appropriate
power of the cyclotomic character) we may assume that $n=0$
(as $LL$ is compatible with twists).

We know that $\Thetabar^{\sss}$ must
be in the image of $LL$. Hence if $\Thetabar^{\sss}$ is irreducible
then it is of the form $\pi(r,0,\chi)$ for some $r,\chi$. However
Lemma~\ref{3.2} tells us that
any irreducible quotient of $I(\sigma_s)$ must be a Jordan-Hoelder
factor of $\pi(s,\lambda,1)$ for some $\lambda$, and by the classification
theorem we must have $\pi(r,0,\chi)\cong\pi(s,0,1)$.
By Theorem~\ref{thm:bergermodp}
we deduce $\Vbar\cong\ind(\omega_2^{s+1})$ in this case.

So now say $\Thetabar^{\sss}$ is reducible (and hence $\Vbar$ is
reducible). The crucial point is
that if $\lambda\in\Fpbar^\times$ then usually the Jordan-Hoelder factors
of $\pi(r,\lambda,\chi)^{\sss}
\oplus\pi([p-3-r],\lambda^{-1},\chi\omega^{r+1})^{\sss}$
(a general reducible element of the image of $LL$)
cannot \emph{all} be subquotients of $I(\sigma_s)$ for the same $s$.
Indeed, applying Lemma~\ref{3.2} to $\Thetabar$
we see that if $\Vbar$ is reducible then
the Jordan-Hoelder factors of $\pi(r,\lambda,\chi)$
and $\pi([p-3-r],\lambda^{-1},\chi\omega^{s+1})$ \emph{all}
have to be subquotients
of the $\pi(s,\lambda',1)$ for varying $\lambda'$, and
by Lemma~\ref{lem:modpequiv} above we note
that this forces both $\chi$ and $\chi\omega^{r+1}$ to be unramified,
so $r\equiv s\equiv p-2$ modulo~$p-1$, and in this case we see from
Theorem~\ref{thm:bergermodp} that $\Vbar$ is unramified.
\end{proof}

\section{The kernel of the map $I(\sigma_{k-2})\to\Thetabar_{k,a_p}$.}

Let us re-iterate our assumptions: $k\geq2$, $v(a_p)>0$ and the roots of
$X^2-a_pX+p^{k-1}$ do not have ratio $1$ or $p^{\pm1}$.
It is clear from the definition that
$\Thetabar_{k,a_p}$ admits a surjection from $I(\sigma_{k-2})/(T)$
and in particular from $I(\sigma_{k-2})$. Note however that
Proposition~\ref{prop:scalaroninertia} does not apply to this
situation, because $\sigma_{k-2}$ is not in general irreducible.

Let $X(k,a_p)$ denote the kernel of the surjection
$I(\sigma_{k-2})\to\Thetabar_{k,a_p}$. In this section we will
analyse $X(k,a_{p})$, and as a consequence
find, in certain cases, that $\Thetabar_{k,a_p}$ does admit a surjection
from $I(W)$ for some \emph{irreducible} $\Fpbar$-representation~$W$ of $K$,
enabling us to apply Proposition~\ref{prop:scalaroninertia}. 

Firstly, we establish some notation, following \cite{breuil2}.
Recall that for $V$ a $\Z_p[K]$-module, the space $I(V)$
was defined previously to be a certain space of functions $G\to V$.
We let $[g,v]$ denote the (unique) element of $I(V)$
which is supported on $KZg^{-1}$,
and which satisfies $[g,v](g^{-1})=v$. Note that $g[h,v]=[gh,v]$ for
$g,h\in G$, that $[gk,v]=[g,kv]$ for $k\in K$, and that
the $[g,v]$ span $I(V)$ as an abelian group, as $g$ and $v$ vary.
For $\lambda\in\Q_p$
write $g^0_{1,\lambda}:=\smallmat{p}{\lambda}{0}{1}$, and
set $\alpha:=\smallmat{1}{0}{0}{p}$. Now let $V=\Symm^{k-2}(R^2)$ for
some $\Z_p$-algebra $R$, thought of, as usual,
as homogeneous polynomials in the variables $x$ and $y$
of degree~$k-2$. An easy consequence of Lemma 2.2.1 of \cite{breuil2}
is that $T([\Id,v])=T^+([\Id,v])+T^-([\Id,v])$
where
$$T^+([\Id,v])=\sum_{\lambda\in\Z_p:\lambda^p=\lambda}
[g^0_{1,\lambda},v(x,py-\lambda x)]$$
and
$$T^-([\Id,v])=[\alpha,v(px,y)].$$

For simplicity now, write $r:=k-2\in\Z_{\geq0}$. Note that $I$
is an exact functor, so if $W\subseteq V$ are $K$-representations
then $I(W)$ is naturally a subset of $I(V)$.

\begin{lemma}\label{lem:imageofT} If $r\geq p$ then
$T.I(\sigma_r)=I(W_r)$
where $W_r$ is the $\Fpbar[K]$-submodule of $\sigma_r$
generated by $y^r$.
\end{lemma}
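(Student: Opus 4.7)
The strategy is to compute $T$ explicitly on the standard generators $[\Id, x^{r-i} y^i]$ (for $0 \leq i \leq r$) of $I(\sigma_r)$, and then establish both inclusions $T \cdot I(\sigma_r) \subseteq I(W_r)$ and $I(W_r) \subseteq T \cdot I(\sigma_r)$. Applying the formulas for $T^+$ and $T^-$ recalled above and working modulo $p$ (so $py = 0$ and $px = 0$), I compute
\[
T^+([\Id, x^{r-i}y^i]) = \sum_{\lambda\in\Fp}(-\lambda)^i[g^0_{1,\lambda}, x^r],
\qquad
T^-([\Id, x^{r-i}y^i]) = \begin{cases} [\alpha, y^r] & \text{if } i = r, \\ 0 & \text{if } i < r. \end{cases}
\]
Since both $y^r$ (by definition) and $x^r = \smallmat{0}{1}{1}{0}\cdot y^r$ lie in $W_r$, this shows that $T([\Id, v]) \in I(W_r)$ for every $v \in \sigma_r$, and $G$-equivariance of $T$ then yields $T\cdot I(\sigma_r)\subseteq I(W_r)$.

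For the reverse inclusion I would perform two reductions. First, $T\cdot I(\sigma_r)$ is $G$-stable and $[g,w] = g\cdot[\Id,w]$, so it suffices to show $[\Id,w]\in T\cdot I(\sigma_r)$ for every $w\in W_r$. Second, the identities $k\cdot[\Id,v] = [k,v] = [\Id,kv]$ for $k\in K$ show that $[\Id, kv]$ is a $K$-translate of $[\Id, v]$, so combined with $K$-stability and the fact that $W_r$ is $K$-generated by $y^r$, the problem reduces to the single claim $[\Id,y^r]\in T\cdot I(\sigma_r)$.

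This is where the hypothesis $r\geq p$ plays its essential role. For $i = 0, 1, \ldots, p-1$ we have $i < r$, so the formulas give
\[
T([\Id, x^{r-i}y^i]) = \sum_{\lambda\in\Fp}(-\lambda)^i[g^0_{1,\lambda}, x^r]\in T\cdot I(\sigma_r).
\]
The coefficient matrix $\bigl((-\lambda)^i\bigr)_{0\leq i\leq p-1,\ \lambda\in\Fp}$ is Vandermonde over $\Fpbar$ and hence invertible, so linear algebra lets us conclude that each individual $[g^0_{1,\lambda}, x^r]$ with $\lambda\in\Fp$ lies in $T\cdot I(\sigma_r)$. Subtracting the appropriate linear combination from $T([\Id,y^r])$ then isolates $[\alpha,y^r]\in T\cdot I(\sigma_r)$, and finally $G$-stability yields $[\Id,y^r] = \alpha^{-1}\cdot[\alpha,y^r]\in T\cdot I(\sigma_r)$.

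The main obstacle is the Vandermonde inversion in the preceding paragraph: the hypothesis $r\geq p$ is precisely what is needed to ensure that $T^-$ vanishes on $[\Id, x^{r-i}y^i]$ for all $i = 0, \ldots, p-1$, so that the resulting $p$ linear equations cleanly separate the $[g^0_{1,\lambda}, x^r]$ contributions without any $[\alpha, y^r]$ contamination. Once this isolation is achieved, the remaining manipulations (extracting $[\alpha, y^r]$ from $T([\Id, y^r])$, and the $\alpha^{-1}$-shift back to $[\Id, y^r]$) are routine.
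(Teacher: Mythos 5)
Your proof is correct. It has the same overall skeleton as the paper's (compute $T^\pm$ explicitly mod $p$, note that $T^+$ only produces multiples of $x^r$ and $T^-$ only multiples of $y^r$ with $x^r=\smallmat{0}{1}{1}{0}y^r\in W_r$ for the easy inclusion, then show $[\alpha,y^r]$ lies in the image and generates $I(W_r)$), but your route to the reverse inclusion is genuinely different in mechanism. The paper produces $[\alpha,y^r]$ in one stroke by applying $T$ to the single element $f=[\Id,y^{r-p}\overline{\theta}/x]$, where $\overline{\theta}/x=\prod_{\lambda}(y-\lambda x)$ forces $T^+f=0$ (this is where $r\geq p$ enters, to make $y^{r-p}$ meaningful); you instead apply $T$ to the monomials $[\Id,x^{r-i}y^i]$ for $0\leq i\leq p-1$, use $r\geq p$ to guarantee these have no $T^-$ term, and invert the resulting Vandermonde system to extract each $[g^0_{1,\lambda},x^r]$ before subtracting them from $T([\Id,y^r])$. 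The two are close cousins: the paper's magic polynomial is exactly a closed-form linear combination of monomials annihilated by $T^+$, so your Vandermonde inversion recovers the same phenomenon without having to guess it. What the paper's trick buys is brevity and portability --- the same $\theta$-manipulation reappears in Lemma 4.2 and the subsequent remark for higher slopes --- while your argument is more systematic and makes transparent exactly why $r\geq p$ is needed (no $T^-$ contamination in the first $p$ equations). One cosmetic point: the sum defining $T^+$ runs over Teichm\"uller representatives $\lambda\in\Z_p$ with $\lambda^p=\lambda$, so when you index by $\lambda\in\Fp$ and form the Vandermonde matrix you are implicitly identifying these with their reductions; that is harmless since distinct representatives give distinct nodes in $\Fpbar$ and distinct cosets $KZ(g^0_{1,\lambda})^{-1}$, but it is worth saying.
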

\begin{remark} If $r\leq p-1$ then the image of $T$ is harder
to describe. It is a minor miracle that one has such a succinct
description of the image of $T$ when $r\geq p$.
\end{remark}
\begin{proof} It is clear that
$T.I(\sigma_r)$ is generated
as an $\Fpbar[G]$-module by elements of the form $T([\Id,v])$, for
$v\in\sigma_r$. By our explicit formula for $T([\Id,v])$ above (which
simplifies because $p=0$ in this situation), we
see that $T([\Id,v])$ may be written as a sum of terms $[h,w]$,
all of which have the property that the $w$s are in
$\Fpbar x^r$ or $\Fpbar y^r$. Because $\smallmat{0}{1}{1}{0}y^r=x^r$,
we see that $T.I(\sigma_r)$ is certainly
contained in $I(W_r)$.

For the converse inclusion, define
$$\overline{\theta}:=xy^p-yx^p
=x\prod_{\lambda:\lambda^{p}=\lambda}(y-\lambda x)\in\Fpbar[x,y].$$
Then if $f:=[\Id,y^{r-p}\overline{\theta}/x]\in I(\sigma_r)$
we have $T^+f=0$, and so $Tf=T^-f=[\alpha,y^r]$, which is easily
checked to generate $I(W_r)$ as an $\Fpbar[G]$-module.
\end{proof}

Recall that $X(k,a_p)$ is the kernel of the map
$I(\sigma_r)\to\Thetabar_{k,a_p}$.
Multiplication by $\thetabar$ induces a map $\sigma_r\to\sigma_{r+p+1}$,
which is $K$-equivariant when thought of as a map
$\sigma_r(1)\to\sigma_{r+p+1}$.

\begin{lemma}\label{lem:smallvinfo} If $v(a_p)<1$ and
$r\geq p+1$ then $I(\thetabar\sigma_{r-(p+1)})\subseteq X(k,a_p).$
\end{lemma}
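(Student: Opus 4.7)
The plan is to prove $[g,\thetabar v]\in X(k,a_p)$ for every $g\in G$ and $v\in\sigma_{r-(p+1)}$ by constructing a characteristic-zero lift of this element and exploiting the relation $T=a_p$ on $\Pi_{k,a_p}$ together with the hypothesis $v(a_p)<1$. Since $X(k,a_p)$ is $G$-stable and $[g,w]=g\cdot[\Id,w]$, it is enough to deal with $g=\Id$. So I would fix a lift $\tilde v\in\Symm^{r-p-1}(\Zpbar^2)$ of $v$, let $\theta:=xy^{p}-yx^{p}\in\Zpbar[x,y]$ be an integral lift of $\thetabar$, and set $F:=\theta\cdot\tilde v\in\Symm^{r}(\Zpbar^2)$. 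Then $[\Id,F]\in\ind_{KZ}^{G}\Symm^{r}(\Zpbar^2)$ reduces mod $p$ to $[\Id,\thetabar v]$, so the claim will follow if the image of $[\Id,F]$ in $\Theta_{k,a_p}$ lies in $\mathfrak m\cdot\Theta_{k,a_p}$, where $\mathfrak m$ is the maximal ideal of $\Zpbar$.

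The key computation is to verify that $T([\Id,F])\in p\cdot\ind_{KZ}^{G}\Symm^{r}(\Zpbar^2)$, using the decomposition $T=T^{+}+T^{-}$ recalled above. The $T^{-}$ contribution $[\alpha,F(px,y)]$ is divisible by $p$ because $\theta(px,y)=p(xy^{p}-p^{p-1}yx^{p})$ is. For each $\lambda\in\Zp$ with $\lambda^{p}=\lambda$, the $T^{+}$ summand $[g^{0}_{1,\lambda},F(x,py-\lambda x)]$ is divisible by $p$ because, modulo $p$,
\[
F(x,py-\lambda x)\equiv\theta(x,-\lambda x)\cdot\tilde v(x,-\lambda x)=\bigl((-\lambda)^{p}+\lambda\bigr)\,x^{p+1}\,\tilde v(x,-\lambda x),
\]
and the scalar $(-\lambda)^{p}+\lambda$ vanishes mod $p$ (for $p$ odd this is $\lambda-\lambda^{p}=0$; for $p=2$ it is $2\lambda$).

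Having written $T([\Id,F])=pH$ for some $H\in\ind_{KZ}^{G}\Symm^{r}(\Zpbar^2)$, I would apply the relation $T=a_p$ on $\Pi_{k,a_p}$ to obtain $a_{p}[\Id,F]=pH$ in $\Theta_{k,a_p}$, hence $[\Id,F]=(p/a_{p})H$ there. The assumption $v(a_p)<1=v(p)$ makes $p/a_p$ a non-unit of $\Zpbar$, so $[\Id,F]\in\mathfrak m\cdot\Theta_{k,a_p}$, and its image in $\Thetabar_{k,a_p}$ vanishes, as required. The main conceptual content is the identity $\theta(x,-\lambda x)\equiv 0\pmod p$ for Teichm\"uller lifts $\lambda$ of $\Fp$: this is precisely the same cancellation that was used in the proof of Lemma~\ref{lem:imageofT}, and explains why $\thetabar$ is the correct polynomial to use. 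There it made $T^{+}$ vanish on the nose; here it makes $T^{+}$ vanish only modulo $p$, but the one extra factor of $p$ is exactly what the relation $T=a_{p}$ together with $v(a_{p})<1$ can absorb.
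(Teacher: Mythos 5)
Your proposal is correct and is essentially the paper's own argument: the same key computation that $\theta(px,y)$ and $\theta(x,py-\lambda x)$ are divisible by $p$, combined with the relation $T=a_p$ in $\Pi_{k,a_p}$ and $v(a_p)<1$ to get an extra factor $p/a_p\in\mathfrak{m}$. The only cosmetic difference is that the paper divides by $a_p$ at the outset (working with $f=[\Id,a_p^{-1}\theta g]$ and showing $Tf-a_pf$ is integral with reduction $-[\Id,\thetabar\overline{g}]$), whereas you divide at the end; the content is identical.
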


\begin{proof} Set $\theta=x^{p}y-xy^{p}\in\Qpbar[x,y]$.
For $g\in\Zpbar[x,y]$ of degree $r-(p+1)$, set
$f=[\Id,a_p^{-1}\theta g]\in\ind_{KZ}^G(\Symm^r(\Qpbar^2)).$
Clearly the image of $Tf-a_pf$ in $\Pi_{k,a_p}$ is zero.
On the other hand, $\theta(x,py-\lambda x)$ (for $\lambda\in\Z_p$)
and $\theta(px,y)$
are in $p\Z_p[x,y]$, and so by the explicit formula for $T$ above
one deduces that $Tf\in(p/a_p)\ind_{KZ}^G\Symm^r(\Zpbar^2))$.
In particular $Tf$
is zero in $\Thetabar_{k,a_p}$. Yet $a_pf$
is $[\Id,\theta g]$, so $Tf-a_pf$ is in $\ind_{KZ}^G\Symm^r(\Zpbar^2))$,
and $[\Id,\thetabar\overline{g}]$ is in $X(k,a_p)$.
\end{proof}

\begin{remark} Although we will not use these results here (but they
could be used if one were trying to formulate an analogue of
Theorem~\ref{thm:main} for larger $v(a_p)$), we note
that similar tricks show that if $v(a_p)<t\in\Z_{\geq1}$ and
$r\geq t(p+1)$ then $I(\thetabar^t\sigma_{r-t(p+1)})\subseteq X(k,a_p)$
(set $f=[\Id,a_p^{-1}\theta^tg]$) and that if $v(a_p)>n\in\Z_{\geq0}$,
if $0\leq i\leq n$ and $r\geq i(p+1)+p$ then
$I(\langle x^iy^{r-i}\rangle_K)\subseteq X(k,a_p),$
where $\langle x^iy^{r-i}\rangle_K$ denotes the sub-$K$-representation
of $\sigma_r$ generated by $x^iy^{r-i}$
(set $f=[\Id,(\theta/p)^i(\theta/x)y^{r-i(p+1)-p}]$).
\end{remark}

\section{Analysis of $0<v<1$.}
If $k\geq p+3$ and $0<v(a_p)<1$ then the roots of $X^2-a_pX+p^{k-1}$
cannot have ratio $1$ or $p^{\pm1}$ so the results of the preceding
sections give
\begin{corollary}\label{cor:ashstevens} If $0<v(a_{p})<1$ and $k\geq p+3$
then $X(k,a_p)$
contains $\ind_{KZ}^G(Y)$ where $Y$ is the sub-$K$-representation
of $\sigma_r$ generated by $\thetabar\sigma_{r-(p+1)}$
and $y^r$. In particular the map $\ind_{KZ}^G(\sigma_r)\to\Thetabar_{k,a_p}$
factors through the induction $\ind_{KZ}^G(\sigma_r/Y)$
of the \emph{irreducible}
representation $\sigma_r/Y$. We have $\sigma_r/Y\cong\sigma_s(r)$
where $0\leq s\leq p-2$ and $s\equiv p-1-r$ mod~$p-1$.\end{corollary}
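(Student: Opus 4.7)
The plan is first to verify the hypotheses, then dispatch the easy inclusion, and finally carry out the representation-theoretic identification of $\sigma_r/Y$, which I expect to be the main obstacle.

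For the hypothesis check: if the roots of $X^2-a_pX+p^{k-1}$ had ratio~$1$ then $a_p^2=4p^{k-1}$ gives $v(a_p)=(k-1)/2\geq(p+2)/2>1$, while ratio $p^{\pm1}$ would give $v(a_p)=(k-2)/2\geq(p+1)/2>1$; both contradict $v(a_p)<1$, so Lemmas~\ref{lem:imageofT} and~\ref{lem:smallvinfo} apply.

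Next I would establish the containment $\ind_{KZ}^G(Y)\subseteq X(k,a_p)$. Since $v(a_p)>0$ the scalar $a_p$ reduces to zero, so $\Thetabar_{k,a_p}$ is killed by $T$; hence the image $T\cdot I(\sigma_r)\subseteq X(k,a_p)$. By Lemma~\ref{lem:imageofT} this image equals $I(W_r)$ with $W_r=\langle y^r\rangle_K$. Lemma~\ref{lem:smallvinfo} supplies $I(\thetabar\sigma_{r-(p+1)})\subseteq X(k,a_p)$. Exactness of compact induction then gives $\ind_{KZ}^G(Y)\subseteq X(k,a_p)$ where $Y=W_r+\thetabar\sigma_{r-(p+1)}$ is the $K$-submodule of $\sigma_r$ generated by $y^r$ and $\thetabar\sigma_{r-(p+1)}$. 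Consequently, the map $\ind_{KZ}^G(\sigma_r)\to\Thetabar_{k,a_p}$ factors through $\ind_{KZ}^G(\sigma_r/Y)$.

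The core of the proof is the identification $\sigma_r/Y\cong\sigma_s(r)$ with $s\equiv p-1-r\pmod{p-1}$ and $0\leq s\leq p-2$; in particular, that $\sigma_r/Y$ is irreducible. I would proceed via the intermediate quotient $V:=\sigma_r/\thetabar\sigma_{r-(p+1)}$. Since $\thetabar=x\prod_{\lambda\in\Fp}(y-\lambda x)$ is precisely the equation cutting out $\mathbf{P}^1(\Fp)\subset\mathbf{P}^1$, the map $V\hookrightarrow H^0(\mathbf{P}^1(\Fp),\mathcal{O}(r))$ is an isomorphism of $\GL_2(\Fp)$-representations. The latter is a principal series $\ind_{B'}^{\GL_2(\Fp)}(\chi_r)$, for $B'$ the Borel stabilizing $[1{:}0]$ (which, under the paper's right-action convention on row-vectors, is the \emph{lower}-triangular subgroup) and $\chi_r$ the character $\smallmat{a}{0}{*}{d}\mapsto a^r$. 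The standard theory of these $(p+1)$-dimensional induced representations in characteristic $p$ gives two Jordan--H\"older factors: a copy of $\sigma_{[r]}$ (with $[r]$ the residue of $r$ mod $p-1$) and a copy of $\sigma_s(r)$ with $s\equiv -r\pmod{p-1}$. Using Frobenius reciprocity, together with the fact that $y^r$ maps to the unique (up to scalar) $U^-$-invariant $T$-eigenvector in $V$ of the correct weight, I would identify $\langle y^r\rangle_K$ mod $\thetabar\sigma_{r-(p+1)}$ as the socle of $V$, leaving $\sigma_r/Y$ as the irreducible cosocle $\sigma_s(r)$. A dimension count $\dim\sigma_r/Y=s+1$ and a direct torus-character calculation on the images of the surviving monomials $[x^iy^{r-i}]$ provide the sanity check.

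The hard part is the last paragraph: correctly pinning down which Borel stabilizes $[1{:}0]$ under the chosen action convention, working out the $\GL_2(\Fp)$ principal series structure in characteristic $p$ (where, unlike characteristic zero, these are almost always reducible even when $\chi_1\neq\chi_2$), and verifying that $y^r$ generates the \emph{socle} rather than some other sub-$K$-module of $V$. Once this is settled, irreducibility of $\sigma_r/Y\cong\sigma_s(r)$ and the explicit form of $s$ follow, completing the corollary.
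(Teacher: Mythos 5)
Your hypothesis check and the containment $\ind_{KZ}^G(Y)\subseteq X(k,a_p)$ are correct and are exactly the paper's argument: $T$ kills $\Thetabar_{k,a_p}$ since $a_p\equiv 0$, Lemma~\ref{lem:imageofT} (applicable as $r=k-2\geq p+1$) gives $I(W_r)\subseteq X(k,a_p)$, Lemma~\ref{lem:smallvinfo} gives $I(\thetabar\sigma_{r-(p+1)})\subseteq X(k,a_p)$, and exactness of $I$ does the rest. Where you diverge is the identification $\sigma_r/Y\cong\sigma_s(r)$: the paper simply quotes Lemma~3.2(a),(c) of Ash--Stevens, whereas you propose to reprove it via restriction of $\sigma_r$ to $\mathbf{P}^1(\Fp)$ and the structure of the resulting principal series. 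That is indeed the standard route behind the cited lemma, but your sketch has a genuine gap at its crucial step.

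First, your description of the constituents of $V=\sigma_r/\thetabar\sigma_{r-(p+1)}$ (``a copy of $\sigma_{[r]}$ and a copy of $\sigma_s(r)$'') fails when $p-1\mid r$, i.e.\ $k\equiv 2 \bmod p-1$, a case the corollary must cover: there the two claimed factors have total dimension $2\neq p+1$. In that case the constituents are a character and a twist of $\St=\sigma_{p-1}$, the induced representation actually splits (the index $p+1\equiv 1 \bmod p$), so ``the socle'' is all of $V$, and the image of $y^r$ generates the $p$-dimensional Steinberg constituent, not a copy of $\sigma_{[r]}=\sigma_0$ (check $p=3$, $r=4$: $\langle y^4\rangle_K=\langle x^4,y^4,x^3y+xy^3\rangle$ has dimension $3$ and its image in $V$ is $3$-dimensional, with quotient $\sigma_0(4)$); so this case needs a separate argument. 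Second, even in the generic case $p-1\nmid r$, the heart of the matter --- that $\bar y^r$ lies in, and hence generates, the proper subrepresentation rather than generating all of $V$ (the extension being nonsplit, a generator of the wrong weight line would generate everything) --- is asserted but not proved. Knowing $\bar y^r$ is a $U^-$-invariant torus eigenvector of weight $d^r$ only settles this once you know which constituent occurs as the \emph{sub}object of $\ind_{B'}^{\GL_2(\Fp)}(\chi_r)$ in characteristic $p$ and that the two $U^-$-weights are distinct; alternatively one can compute $\dim\bigl(\langle y^r\rangle_K \bmod \thetabar\sigma_{r-(p+1)}\bigr)$ directly from the span of the $(bx+dy)^r$ via power-sum identities. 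Supplying precisely this input is the content of the Ash--Stevens lemma the paper invokes, so as it stands your proposal has not replaced that citation with a complete argument.
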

\begin{proof} The first part follows from Lemmas \ref{lem:imageofT} and
\ref{lem:smallvinfo}. The rest of the Corollary follows once one knows the
isomorphism $\sigma_r/Y\cong\sigma_s(r)$, which follows
from Lemma~3.2(a) and~(c) of~\cite{ashstevens}.
\end{proof}
\begin{corollary} Theorem~1.6 is true.
\end{corollary}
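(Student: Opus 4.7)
The plan is to combine Corollary~5.1 with Proposition~\ref{prop:scalaroninertia} and then do some bookkeeping modulo $p^2-1$. For $p=2$ the theorem is vacuous, and as noted in the introduction, for $p>2$ with $k\leq 2p+1$ the statement already follows from Th\'eor\`eme~3.2.1 of \cite{ber05}. So I may restrict to $p>2$ and $k\geq p+3$, within the scope of Corollary~5.1.

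First I would verify that $V_{k,a_p}$ is irreducible as a $p$-adic representation: a direct weak-admissibility argument on $D_{k,a_p}$ shows any $\phi$-stable line has positive Frobenius valuation and Hodge weight $0$ or $k-1$, and the complementary quotient then violates weak admissibility. Corollary~5.1 then provides a surjection $I(\sigma_s(k-2))\twoheadrightarrow\Thetabar_{k,a_p}$ with $0\leq s\leq p-2$ and $s\equiv p-1-(k-2)\pmod{p-1}$. Feeding this into Proposition~\ref{prop:scalaroninertia} with $n=k-2$ yields two possibilities: either
\[
\Vbar_{k,a_p}\cong\ind(\omega_2^{s+1+(p+1)(k-2)})
\]
is irreducible, or $\Vbar_{k,a_p}$ is reducible with $s=p-2$ and $\Vbar_{k,a_p}|_{I_p}\cong\omega^{k-2}\oplus\omega^{k-2}$.

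It remains to match these outputs with the statement of the theorem. Write $k-2=[k-2]+m(p-1)$ with $m\geq 0$, so $(p+1)(k-2)\equiv(p+1)[k-2]\pmod{p^2-1}$. If $[k-2]=0$ then $s=0$ and the exponent reduces to $1=t$. Otherwise $s=p-1-[k-2]$, and a short calculation gives
\[
s+1+(p+1)(k-2)\equiv p+p[k-2]=pt\pmod{p^2-1}.
\]
Since $\omega_2$ and $\omega_2^p$ are conjugate under $\Frob_p$, we have $\ind(\omega_2^{pt})\cong\ind(\omega_2^t)$, so in either subcase the irreducible possibility yields precisely $\Vbar_{k,a_p}\cong\ind(\omega_2^t)$.

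In the reducible case, $s=p-2$ combined with $s\equiv 1-k\pmod{p-1}$ and $0\leq s\leq p-2$ forces $[k-2]=1$ (the alternative $[k-2]=0$ gives $s=0\neq p-2$ since $p>2$). Thus $k\equiv 3\pmod{p-1}$, and then $\omega^{k-2}|_{I_p}=\omega|_{I_p}$, giving $\Vbar_{k,a_p}|_{I_p}\cong\omega\oplus\omega$. The only real obstacle is keeping the indexing arithmetic above straight; all of the representation-theoretic content has already been bundled into Corollary~5.1 and Proposition~\ref{prop:scalaroninertia}.
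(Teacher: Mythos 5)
Your proof is correct and follows essentially the same route as the paper: reduce to $p>2$, $k\geq p+3$, apply Corollary~5.1 to get a surjection from $I(\sigma_s(k-2))$ with $s\equiv 2-k\pmod{p-1}$, then feed this into Proposition~3.4. The paper's version simply says "the result now follows ... and some elementary arithmetic"; you have carried out that arithmetic (matching $s+1+(p+1)(k-2)$ to $t$ or $pt$ modulo $p^2-1$, and deducing $k\equiv 3\pmod{p-1}$ from $s=p-2$), and it checks out.
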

\begin{proof} By the comments following the statement of the theorem,
we need only deal with the case $p>2$ and $k\geq 2p+2$. In fact
we prove the result for $p>2$ and $k\geq p+3$. First fix~$s$
such that $0\leq s\leq p-2$ and $s\equiv 2-k$~mod~$p-1$.
By Corollary~\ref{cor:ashstevens} we see $\Thetabar_{k,a_p}$
is a quotient of $I(\sigma_s(k-2))$. 
The result now follows from Proposition \ref{prop:scalaroninertia}
and some elementary arithmetic.
\end{proof}

\begin{corollary} If $f$ is a modular form of weight $k\geq p+3$,
level prime to $p$, and $0<v(a_p)<1$, and if $0\leq g\leq p-2$
with $g\equiv k-2$~mod~$p-1$, then either
$(\rhobar_{f}|_{D_{p}})^{ss}\cong\ind(\omega_2^{g+1})$,
or $k\equiv3$ mod $p-1$ and
$(\rhobar_{f}|_{I_{p}})^{ss}\cong\omega\oplus\omega$.
\end{corollary}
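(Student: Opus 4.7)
The plan is to deduce the corollary directly from Theorem~\ref{thm:main} via the Scholl--Faltings identification provided by Theorem~\ref{1.4}. First I would twist $f$ by an unramified character so as to arrange $\psi(p)=1$, as permitted by the discussion preceding Theorem~\ref{1.4}; this operation leaves $v(a_p)$ unchanged and alters $\rhobar_f|_{D_p}$ only by an unramified twist (in particular leaves its restriction to $I_p$ unchanged up to isomorphism). Since $k\geq p+3$ automatically forces $a_p^2\neq 4p^{k-1}$, Theorem~\ref{1.4} combined with the definition of $V_{k,a_p}$ then gives $\rho_f|_{D_p}\cong V_{k,a_p}$, and hence $(\rhobar_f|_{D_p})^{\sss}\cong \Vbar_{k,a_p}$.

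Next I would apply Theorem~\ref{thm:main} to $V_{k,a_p}$: since $k\geq 2$ and $0<v(a_p)<1$, we conclude that either $\Vbar_{k,a_p}\cong \ind(\omega_2^{t})$ with $t=[k-2]+1$, or else $k\equiv 3 \pmod{p-1}$ and $\Vbar_{k,a_p}|_{I_p}\cong \omega\oplus\omega$. The elementary arithmetic now amounts to observing that the constraints $0\leq g\leq p-2$ and $g\equiv k-2\pmod{p-1}$ force $g=[k-2]$, and therefore $g+1=t$. This gives the first alternative of the corollary in the irreducible case, and the second alternative in the reducible case (where the unramified twist from the first step is invisible on $I_p$, so is harmless).

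The main (mild) obstacle is just the bookkeeping around the unramified twist: in the irreducible case, if one is pedantic, then $(\rhobar_f|_{D_p})^{\sss}$ is a priori an unramified twist of $\ind(\omega_2^{g+1})$ rather than $\ind(\omega_2^{g+1})$ on the nose. This is implicit in the notation used in the statement, and any such twist shares the relevant features (irreducibility and the prescribed restriction to $I_p$) with $\ind(\omega_2^{g+1})$ itself; one could alternatively absorb the twist back into a redefinition of $a_p$. Apart from this small normalisation issue, the proof is a pure combination of Theorem~\ref{thm:main} and Theorem~\ref{1.4}, with no further ingredients required.
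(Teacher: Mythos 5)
Your proposal is correct and is essentially the paper's own argument: the paper deduces this corollary from the preceding corollary (which is exactly Theorem~\ref{thm:main}) together with Theorem~\ref{1.4}, just as you do. Your extra remarks — checking $a_p^2\neq 4p^{k-1}$ (which really uses $0<v(a_p)<1$ together with $k\geq 3$, not $k\geq p+3$ alone), identifying $g=[k-2]$ so $g+1=t$, and noting that the unramified twist arranging $\psi(p)=1$ is harmless on $I_p$ and implicit in the statement's normalisation — are only details the paper leaves unsaid.
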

\begin{proof}
This follows from the preceding corollary and Theorem~\ref{1.4}.
\end{proof}
\begin{remark} For $k\leq p+2$ slightly stronger results are known
(for example
if $k\leq p+2$ and $p>2$ then the results of \cite{breuil2} show that
the reducible case can't occur). On the other hand when $k=2p+1$
and $p>2$, unpublished calculations of Breuil, and numerical examples due to
one of us (KB) show that when $v(a_p)=1/2$ both reducible and irreducible
possibilities can occur.

\end{remark}

\begin{corollary}\label{cor:eigencurve}If $2\leq p\leq 53$, then the slopes
of level $1$ modular forms of all weights are never in the range $(0,1)$.
\end{corollary}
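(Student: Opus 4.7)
The plan is to argue by contradiction, combining the preceding Corollary with Serre's conjecture at level $1$ (Khare) and a finite computer check. Suppose, for some prime $p$ with $2 \le p \le 53$, that a level $1$ cuspidal eigenform $f$ of weight $k$ has $v(a_p(f)) \in (0,1)$.

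First I would dispose of the low-weight range $k \le p+2$. Here the list of weights admitting a level $1$ cusp form is very short (empty for $k \le 11$, and then $k \in \{12,16,18,20,22,26,\ldots\}$), so the surviving cases for each $p\le 53$ form a finite explicit list. For each of these, the explicit results of Breuil in \cite{breuil2} together with Theorem~\ref{thm:deligneserrefontaineedixhoven} pin down $\rhobar_f|_{D_p}$, and the slope is forced to be either $0$ (ordinary case) or $\ge 1$; this can be checked directly from tabulated Hecke eigenvalues.

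For $k \ge p+3$, invoke the preceding Corollary, which gives that $(\rhobar_f|_{I_p})^{\sss}$ is either $\omega_2^{g+1}\oplus\omega_2^{p(g+1)}$ with $0\le g\le p-2$ and $g\equiv k-2 \pmod{p-1}$, or else $k\equiv 3 \pmod{p-1}$ and $(\rhobar_f|_{I_p})^{\sss}\cong \omega\oplus\omega$. Next, by Khare's proof of Serre's conjecture for level $1$ representations, $\rhobar_f \cong \rhobar_g$ for some level $1$ cuspidal eigenform $g$ of Serre weight $k(\rhobar_f) \le p+1 \le 54$. Since the set of residual representations arising from level $1$ cusp forms of weight at most $p+1$ is finite and explicitly computable (from tables of Hecke eigenvalues) for each $p$ in our range, the problem reduces to a finite check: for each $p \le 53$, verify that no such $\rhobar_g$ of small weight has local structure at $p$ of the shape dictated by the Corollary (e.g.\ by observing that the small-weight level $1$ cusp forms are ordinary at $p$, so $(\rhobar_g|_{D_p})^{\sss}$ is a sum of two characters, at most one of which is unramified, ruling out both the irreducible and scalar-on-inertia possibilities).

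The main obstacle is the exceptional reducible case $k\equiv 3 \pmod{p-1}$ with $(\rhobar_f|_{I_p})^{\sss}\cong \omega\oplus\omega$, since only the semisimple inertial type is pinned down and the peu/tr\`es ramifi\'e structure at $p$ is not. Here the Serre weight recipe permits more than one small-weight companion, so one must use additional global input, namely that $\rhobar_f$ is unramified outside $p$, to severely restrict the characters appearing in $\rhobar_f^{\sss}$ and thereby cut the list down to a finite check that can be carried out uniformly for $2 \le p \le 53$.
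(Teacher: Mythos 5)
Your reduction of the main (locally irreducible) case to a finite ordinarity check on low-weight level~$1$ forms is sound in outline, though you invoke Khare's proof of Serre's conjecture at level~$1$ where the paper only needs the much older theta-cycle weight bound (Corollary~3.6 of~\cite{ashstevens}), which works directly with systems of mod~$p$ Hecke eigenvalues and does not require $\rhobar_f$ to be globally irreducible. The genuine gap is your treatment of the exceptional case $k\equiv 3\pmod{p-1}$ with $(\rhobar_f|_{I_p})^{\sss}\cong\omega\oplus\omega$: you correctly flag it as the main obstacle, but you only gesture at ``additional global input'' restricting ``the characters appearing in $\rhobar_f^{\sss}$'', which tacitly assumes $\rhobar_f$ is globally reducible (and in that situation Khare's theorem, your main mechanism, is unavailable anyway); no completed argument is given. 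In fact no such analysis is needed: since $p$ is odd, $p-1$ is even, so $k\equiv3\pmod{p-1}$ forces $k$ to be odd, and there are no nonzero level~$1$ forms of odd weight. This one-line parity observation is exactly how the paper kills the exceptional branch, and without it (or an equivalent finished argument) your proof does not close.

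You also never address $p=2$, and your elimination strategy cannot work there: mod~$2$ the character $\omega$ is trivial, so the ordinary low-weight forms you compare against are themselves scalar on inertia, and the exceptional branch of the preceding corollary is automatically permitted because $p-1=1$; hence nothing is ruled out. The paper disposes of $p=2$ separately by citing Hatada's theorem~\cite{MR516060} (for level~$1$ eigenforms $a_2\equiv 0\bmod 8$, so the slope at $2$ is never in $(0,1)$), and you would need this or some substitute to cover the full range $2\le p\le 53$ claimed in the statement.
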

\begin{proof}If $p=2$ this follows from the results of \cite{MR516060}.
If $p$ is odd then  the congruence $k=3$ mod $p-1$ implies
that $k$ is odd, and there are no level~1 forms of odd weight.
The corollary would follow if we knew that for $f$ a modular form
of level~1 and $p\leq 53$ then $\rhobar_{f}|_{D_{p}}$ is always reducible.
But Corollary~3.6 of~\cite{ashstevens} reduces this statement to
a finite check (we only need to check level~1 eigenforms of weight at
most $p+1\leq 54$), and by~Theorem~\ref{thm:deligneserrefontaineedixhoven}
we see that what we must verify is that if $2<p\leq 53$ then the
$T_p$-eigenvalues on the space of cusp forms of level~1
and weight~$k\leq p+1$ are all $p$-adic units, which is easily checked
nowadays on a computer.
\end{proof}
\begin{remark} If $p=59$ and $f$ is the normalised level~1 weight~16
cusp form then $f$ is non-ordinary at~$p$, the local mod~$p$ representation
is irreducible, and there do exist level~1 modular forms with slope
equal to $1/2$ (for example, in weight~74).
\end{remark}

\providecommand{\bysame}{\leavevmode\hbox to3em{\hrulefill}\thinspace}
\providecommand{\MR}{\relax\ifhmode\unskip\space\fi MR }

\providecommand{\MRhref}[2]{

  \href{http://www.ams.org/mathscinet-getitem?mr=#1}{#2}
}
\providecommand{\href}[2]{#2}


\begin{thebibliography}{Bre03b}

\bibitem[AS86]{ashstevens}
Avner Ash and Glenn Stevens, \emph{Modular forms in characteristic {$l$} and
  special values of their {$L$}-functions}, Duke Math. J. \textbf{53} (1986),
  no.~3, 849--868. \MR{MR860675 (88h:11036)}

\bibitem[BB08]{berger-breuil}
Laurent Berger and Christophe Breuil, \emph{Sur quelques repr{\'e}sentations
  potentiellement cristallines de $\operatorname{GL}_2(\mathbb{Q}_p)$},
  Asterisque (to appear) (2008).

\bibitem[Ber08]{ber05}
Laurent Berger, \emph{Repr\'{e}sentations modulaires de
  $\operatorname{GL}_2(\mathbb{Q}_p)$ et repr\'{e}sentations galoisiennes de
  dimension 2}, Asterisque (to appear) (2008).

\bibitem[BL94]{barthel-livne}
L.~Barthel and R.~Livn{\'e}, \emph{Irreducible modular representations of
  {${\rm GL}\sb 2$} of a local field}, Duke Math. J. \textbf{75} (1994), no.~2,
  261--292. \MR{MR1290194 (95g:22030)}

\bibitem[BLZ04]{berger-li-zhu}
Laurent Berger, Hanfeng Li, and Hui~June Zhu, \emph{Construction of some
  families of 2-dimensional crystalline representations}, Math. Ann.
  \textbf{329} (2004), no.~2, 365--377. \MR{MR2060368 (2005k:11104)}

\bibitem[BM02]{breuil-mezard}
Christophe Breuil and Ariane M{\'e}zard, \emph{Multiplicit\'es modulaires et
  repr\'esentations de {${\rm GL}\sb 2({\bf Z}\sb p)$} et de {${\rm
  Gal}(\overline{\bf Q}\sb p/{\bf Q}\sb p)$} en {$l=p$}}, Duke Math. J.
  \textbf{115} (2002), no.~2, 205--310, With an appendix by Guy Henniart.
  \MR{MR1944572 (2004i:11052)}

\bibitem[Bre03a]{breuil1}
Christophe Breuil, \emph{Sur quelques repr\'esentations modulaires et
  {$p$}-adiques de {${\rm GL}\sb 2(\bold Q\sb p)$}. {I}}, Compositio Math.
  \textbf{138} (2003), no.~2, 165--188. \MR{MR2018825 (2004k:11062)}

\bibitem[Bre03b]{breuil2}
\bysame, \emph{Sur quelques repr\'esentations modulaires et {$p$}-adiques de
  {${\rm GL}\sb 2(\bold Q\sb p)$}. {II}}, J. Inst. Math. Jussieu \textbf{2}
  (2003), no.~1, 23--58. \MR{MR1955206 (2005d:11079)}

\bibitem[CE98]{coleman-edixhoven}
Robert~F. Coleman and Bas Edixhoven, \emph{On the semi-simplicity of the {$U\sb
  p$}-operator on modular forms}, Math. Ann. \textbf{310} (1998), no.~1,
  119--127. \MR{MR1600034 (99b:11043)}

\bibitem[DS74]{deligne-serre}
Pierre Deligne and Jean-Pierre Serre, \emph{Formes modulaires de poids {$1$}},
  Ann. Sci. \'Ecole Norm. Sup. (4) \textbf{7} (1974), 507--530 (1975).
  \MR{MR0379379 (52 \#284)}

\bibitem[Edi92]{edix:weights}
Bas Edixhoven, \emph{The weight in {S}erre's conjectures on modular forms},
  Invent. Math. \textbf{109} (1992), no.~3, 563--594. \MR{MR1176206
  (93h:11124)}

\bibitem[Hat79]{MR516060}
Kazuyuki Hatada, \emph{Eigenvalues of {H}ecke operators on {${\rm SL}(2,\,{\bf
  Z})$}}, Math. Ann. \textbf{239} (1979), no.~1, 75--96. \MR{MR516060
  (80b:10037)}

\bibitem[Wil88]{wiles-inv}
A.~Wiles, \emph{On ordinary {$\lambda$}-adic representations associated to
  modular forms}, Invent. Math. \textbf{94} (1988), no.~3, 529--573.
  \MR{MR969243 (89j:11051)}

\end{thebibliography}
\end{document}